\newcommand{\A}{{\mathcal{A}}}
\newcommand{\C}{{\mathcal{C}}}
\newcommand{\D}{{\mathcal{D}}}
\newcommand{\G}{{\mathcal{G}}}
\newcommand{\col}{{\rm colim}\,}
\newcommand{\kk}{{\Bbbk}}
\newcommand{\md}{\text{-}\mathbf{Mod}}
\newcommand{\mdf}{\text{-}\mathbf{mod}}
\newcommand{\pol}{\mathbf{Pol}}
\newcommand{\gr}{\mathbf{gr}}
\newcommand{\abl}{\mathfrak{a}}
\newcommand{\fct}{\mathbf{Fct}}
\title{Sur l'homologie des groupes d'automorphismes des groupes libres \`a coefficients polynomiaux}
\author{Aur\'elien Djament\thanks{CNRS, laboratoire de math\'ematiques Jean Leray, Nantes ; aurelien.djament@univ-nantes.fr}\; et Christine Vespa\thanks{Institut de Recherche Math\'ematique Avanc\'ee, universit\'e de Strasbourg, vespa@math.unistra.fr. Cet auteur est partiellement soutenu par le projet ANR-11-BS01-0002 HOGT : \textit{Homotopie, Opérades et Groupes de Grothendieck-Teichm\"uller}.}}
\newtheorem{thi}{Th\'eor\`eme}
\newtheorem{pri}{Proposition}
\newtheorem{thm}{Th\'eor\`eme}[section]
\newtheorem{pr}[thm]{Proposition}
\newtheorem{cor}[thm]{Corollaire}
\newtheorem{lm}[thm]{Lemme}
\theoremstyle{definition}
\newtheorem{defi}[thm]{D\'efinition}
\theoremstyle{remark}
\newtheorem{rem}[thm]{Remarque}
\begin{document}

\maketitle

\begin{abstract}
Nous montrons que l'homologie stable des groupes d'automorphismes des groupes libres à coefficients tordus par un foncteur covariant polynomial est trivial. Pour le foncteur d'abélianiation, qui est polynomial de degré $1$, nous retrouvons par des méthodes algébriques un résultat précédemment obtenu par Hatcher-Wahl, par des méthodes topologiques et géométriques. Pour les coefficients donnés par un foncteur polynomial contravariant se factorisant par l'abélianisation, nous calculons la valeur stable du premier groupe d'homologie des groupes d'automorphismes des groupes libres, qui est généralement non nul.
\smallskip

\begin{center}
\textbf{Abstract}
\end{center}
We prove that the stable homology of automorphism groups of free groups with twisted coefficients given by a polynomial covariant functor is trivial. For the abelianization functor, which is polynomial of degree $1$, we recover by algebraic methods a result previously obtained by Hatcher-Wahl by topological and geometrical methods. For coefficients given by a contravariant polynomial functor factorizing through the abelianization, we compute the stable value of the first homology group of automorphism groups of free groups, which is generally  nonzero.

\end{abstract}

\smallskip

\section*{Introduction}

Tandis que l'étude de la structure des groupes d'automorphismes ${\rm Aut}\,(\mathbb{Z}^{*n})$ des groupes libres commença dès les années 1920 (cf. par exemple l'article \cite{Ni}, dans lequel Nielsen en donne une présentation par générateurs et relations), la compréhension de leur homologie s'avéra nettement plus ardue : aucun résultat significatif ne semble avoir été obtenu avant les années 1980. Dans les années 1990, la stabilité homologique a été démontrée pour ces groupes : dans  \cite{HVo} Hatcher et Vogtmann établissent que pour tout entier $i$, le morphisme canonique $H_i({\rm Aut}\,(\mathbb{Z}^{*n}); \mathbb{Z})\to H_i({\rm Aut}\,(\mathbb{Z}^{*n+1}); \mathbb{Z})$ est un isomorphisme pour $n \geq 2i+3$. Quant au calcul de $H_i({\rm Aut}\,(\mathbb{Z}^{*n}); \mathbb{Z})$, hormis pour $i$ ou $n$ très petit, il demeura très mystérieux jusqu'à ce que Galatius démontre dans \cite{Gal} le résultat remarquable suivant : l'inclusion évidente du groupe symétrique $\mathfrak{S}_n$ dans ${\rm Aut}\,(\mathbb{Z}^{*n})$ induit stablement un isomorphisme en homologie à coefficients entiers --- {\em stablement} signifiant : lorsqu'on passe à la colimite sur $n$ (ou pour $n\geq 2i+3$, $i$ désignant le degré homologique). Rappelons que le calcul de l'homologie des groupes symétriques est bien plus ancien ; il est dû à Nakaoka (cf. \cite{Nak}).

  Le théorème de Galatius ne traite que de l'homologie à coefficients {\em constants} des groupes ${\rm Aut}\,(\mathbb{Z}^{*n})$. Il est également très naturel de s'intéresser à l'homologie (ou à la cohomologie) de ces groupes à coefficients dans des représentations remarquables, comme l'abélianisation du groupe libre $\mathbb{Z}^{*n}$ ou des représentations obtenues en appliquant à celle-ci un foncteur ou un bifoncteur sur les groupes abéliens libres.

Le résultat principal du présent article est le suivant :
\begin{thi}\label{thfi}
 Soit $F$ un foncteur polynomial réduit (i.e. nul sur le groupe trivial) de la catégorie $\mathbf{gr}$ des groupes libres de rang fini vers la catégorie $\mathbf{Ab}$ des groupes abéliens. Alors
$$\underset{n\in\mathbb{N}}{\col}H_*\big({\rm Aut}\,(\mathbb{Z}^{*n});F(\mathbb{Z}^{*n})\big)=0.$$ 
\end{thi}
\noindent (La notion de foncteur polynomial est introduite et étudiée dans ce contexte de groupes libres en début d'article ; des exemples typiques sont les puissances tensorielles du foncteur d'abélianisation.)

Lorsque $F$ est le foncteur d'abélianisation, ce résultat avait déjà été démontré, par des méthodes totalement différentes, dans des travaux d'Hatcher et Wahl, où ils obtiennent également un résultat de stabilité (voir \cite{HW} et son erratum \cite{HW-erra}, ainsi que \cite{HW10}). En fait, on peut obtenir le théorème~\ref{thfi} pour toutes les puissances tensorielles de l'abélianiation comme conséquence de l'article \cite{HV04} d'Hatcher et Vogtmann (voir aussi son erratum \cite{HVW} avec Wahl), comme l'a observé Randal-Williams dans \cite{RW}. Tous ces travaux reposent sur des considérations de topologie différentielle. Par une approche encore indépendante (purement algébrique), Satoh a également étudié de tels groupes d'homologie, rendant certains d'entre eux accessibles au calcul (y compris dans le cas instable) en degré homologique $1$ ou $2$ --- voir \cite{Sat1} et \cite{Sat2}.

Alors que le théorème de Galatius affirme que, stablement, l'homologie à coefficients entiers des groupes 
${\rm Aut}\,(\mathbb{Z}^{*n})$ est la même que celle des groupes symétriques, le théorème~\ref{thfi} nous apprend que ceci n'est plus le cas pour l'homologie à coefficients tordus. En effet, dans \cite{Bet-sym} Betley montre que l'homologie stable des groupes symétriques à coefficients dans un foncteur polynomial réduit est loin d'être nulle. Néanmoins, le théorème~\ref{thfi} peut être conçu comme un relèvement aux groupes d'automorphismes des groupes libres du théorème dû à Betley (cf. \cite{Bet2}) selon lequel l'homologie des groupes linéaires sur $\mathbb{Z}$ (cela vaut d'ailleurs pour tout anneau) à coefficients tordus par un foncteur polynomial réduit est stablement nulle. Dans cette situation, considérer un foncteur covariant ou contravariant ne change rien, puisque l'involution des groupes linéaires donnée par la transposée de l'inverse permet de passer d'une situation à l'autre (en revanche, considérer des {\em bi}foncteurs polynomiaux réduits donne lieu à une homologie généralement non nulle ; pour cette généralisation remarquable des résultats de Betley, voir \cite{Bet} ou l'appendice de \cite{FFSS} sur un corps fini et \cite{Sco} pour le cas général). Pour les groupes d'automorphismes des groupes libres, il n'en est pas de même, l'involution en question ne s'y relevant pas ; de fait, en s'appuyant sur le théorème de Betley susmentionné, on parvient à montrer :

\begin{pri}\label{thi2}
 Soit $F$ un foncteur polynomial réduit contravariant de la catégorie $\mathbf{ab}$ des groupes abéliens libres de rang fini vers $\mathbf{Ab}$. Il existe un isomorphisme naturel
$$\underset{n\in\mathbb{N}}{\col} H_1({\rm Aut}\,(\mathbb{Z}^{*n});F(\mathbb{Z}^n))\simeq F\underset{\mathbf{ab}}{\otimes}\mathrm{Id}$$
où ${\rm Aut}\,(\mathbb{Z}^{*n})$ opère sur $F(\mathbb{Z}^n)$ via la projection sur $GL_n(\mathbb{Z})$ et $\mathrm{Id} : \mathbf{ab}\to\mathbf{Ab}$ désigne le foncteur d'inclusion.
\end{pri}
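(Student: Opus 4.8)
The plan is to exploit the extension
$$1 \to \mathrm{IA}_n \to \mathrm{Aut}(\mathbb{Z}^{*n}) \to GL_n(\mathbb{Z}) \to 1,$$
where $\mathrm{IA}_n$ is the kernel of the action on the abelianization. By hypothesis the action on $F(\mathbb{Z}^n)$ factors through $GL_n(\mathbb{Z})$, so $\mathrm{IA}_n$ acts trivially on $F(\mathbb{Z}^n)$ and the Lyndon--Hochschild--Serre spectral sequence of the extension reads $E^2_{p,q} = H_p\big(GL_n(\mathbb{Z}); H_q(\mathrm{IA}_n; F(\mathbb{Z}^n))\big)$, with $F(\mathbb{Z}^n)$ a trivial $\mathrm{IA}_n$-module in the inner homology. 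First I would write out the associated five-term exact sequence
\begin{multline*}
H_2(\mathrm{Aut}(\mathbb{Z}^{*n}); F(\mathbb{Z}^n)) \to H_2(GL_n(\mathbb{Z}); F(\mathbb{Z}^n)) \to H_0\big(GL_n(\mathbb{Z}); H_1(\mathrm{IA}_n; F(\mathbb{Z}^n))\big) \\ \to H_1(\mathrm{Aut}(\mathbb{Z}^{*n}); F(\mathbb{Z}^n)) \to H_1(GL_n(\mathbb{Z}); F(\mathbb{Z}^n)) \to 0,
\end{multline*}
which is natural in the stabilisation maps $\mathrm{Aut}(\mathbb{Z}^{*n}) \to \mathrm{Aut}(\mathbb{Z}^{*n+1})$ (these being compatible with the extensions), hence remains exact after passing to the filtered colimit over $n$.

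Second, I would invoke Betley's theorem: since $F$ is reduced polynomial, $\underset{n}{\col}\,H_*(GL_n(\mathbb{Z}); F(\mathbb{Z}^n)) = 0$ in all degrees, so the two outer colimit terms and the $H_2(GL_n(\mathbb{Z}); -)$ term vanish. The five-term sequence then collapses to a natural isomorphism
$$\underset{n}{\col}\,H_1(\mathrm{Aut}(\mathbb{Z}^{*n}); F(\mathbb{Z}^n)) \xrightarrow{\ \sim\ } \underset{n}{\col}\,H_0\big(GL_n(\mathbb{Z}); H_1(\mathrm{IA}_n; F(\mathbb{Z}^n))\big).$$
To unravel the right-hand side I would use the classical computation (Magnus, Andreadakis, Bachmuth) of the abelianization $H_1(\mathrm{IA}_n; \mathbb{Z}) \cong \mathrm{Hom}(\mathbb{Z}^n, \Lambda^2 \mathbb{Z}^n) = A^* \otimes \Lambda^2 A$ with $A = \mathbb{Z}^n$, as a $GL_n(\mathbb{Z})$-module. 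As $\mathrm{IA}_n$ acts trivially on $F(\mathbb{Z}^n)$ and $H_0(\mathrm{IA}_n; \mathbb{Z}) = \mathbb{Z}$ is free, the universal coefficient theorem gives $H_1(\mathrm{IA}_n; F(\mathbb{Z}^n)) \cong A^* \otimes \Lambda^2 A \otimes F(A)$ with diagonal $GL_n(\mathbb{Z})$-action. The task thus becomes the computation of the stable coinvariants $\underset{n}{\col}\,\big(A^* \otimes \Lambda^2 A \otimes F(A)\big)_{GL_n(\mathbb{Z})}$.

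Third, I would identify these stable coinvariants with the coend $\int^{A \in \mathbf{ab}} A^* \otimes \Lambda^2 A \otimes F(A)$, regarding the integrand as the diagonal of the bifunctor $(V,W) \mapsto V^* \otimes F(V) \otimes \Lambda^2 W$ and relying on the general principle (established earlier) that the stable value of the $GL$-coinvariants of a polynomial bifunctor computes its coend. The contraction $A^* \otimes \Lambda^2 A \to A$, $\varphi \otimes (a \wedge b) \mapsto \varphi(a)b - \varphi(b)a$, is compatible with the coend relations, so it induces a map $\int^A A^* \otimes \Lambda^2 A \otimes F(A) \to \int^A A \otimes F(A) = F \underset{\mathbf{ab}}{\otimes} \mathrm{Id}$. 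To prove this map is an isomorphism I would substitute the co-Yoneda presentation $F(A) \cong \int^B (A^*\otimes B) \otimes F(B)$ and apply Fubini, which peels off $\int^B B \otimes F(B) = F \underset{\mathbf{ab}}{\otimes} \mathrm{Id}$ and reduces the whole claim to the single $F$-independent coend $\int^A A^* \otimes A^* \otimes \Lambda^2 A \cong \mathbb{Z}$. This last group measures the multiplicity of $\Lambda^2$ inside $\otimes^2$ and is computed through the $\mathfrak{S}_2$-module description of degree-two functors, under which $\otimes^2 \mapsto \mathbb{Z}[\mathfrak{S}_2]$ and $\Lambda^2 \mapsto \mathrm{sgn}$, giving $\mathbb{Z}$ with no spurious $2$-torsion.

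The hard part will be exactly this final step: on the one hand verifying that the stable $GL$-coinvariants genuinely compute the coend in the form required here, and on the other controlling the integral coend $\int^A A^* \otimes A^* \otimes \Lambda^2 A$ precisely enough that the contraction yields an isomorphism \emph{on the nose} and not merely after inverting $2$. The spectral sequence reduction and the appeal to Betley's theorem are, by contrast, essentially formal once the extension and the triviality of the $\mathrm{IA}_n$-action are in place.
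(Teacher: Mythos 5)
Up to the last step your argument is exactly the paper's: the extension $1\to IA_n\to{\rm Aut}\,(\mathbb{Z}^{*n})\to GL_n(\mathbb{Z})\to 1$, its five-term exact sequence (natural in $n$, hence exact after the filtered colimit), Betley's vanishing of $\underset{n\in\mathbb{N}}{\col}\,H_*(GL_n(\mathbb{Z});F(\mathbb{Z}^n))$, the Magnus--Andreadakis identification $H_1(IA_n)\simeq{\rm Hom}(\mathbb{Z}^n,\Lambda^2\mathbb{Z}^n)$, and the degree-zero case of Scorichenko identifying the stable coinvariants with $(F\otimes\mathrm{Id}^\vee)\underset{\mathbf{ab}}{\otimes}\Lambda^2$. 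The gap is in your final reduction. The ``co-Yoneda presentation'' $F(A)\simeq\int^B(A^*\otimes B)\otimes F(B)$ is false for a polynomial functor of degree $\geq 2$: the co-Yoneda lemma writes $F(A)\simeq\int^B\mathbb{Z}[\mathbf{ab}(A,B)]\otimes F(B)$ with the \emph{free abelian group on the hom-set}, not the hom-group $A^*\otimes B$. With the hom-group, the right-hand side equals $A^*\otimes\bigl(F\underset{\mathbf{ab}}{\otimes}\mathrm{Id}\bigr)$, which is additive in $A$ and therefore cannot be $F(A)$ already for $F=(\mathrm{Id}^\vee)^{\otimes 2}$. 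Consequently the Fubini step that peels off $F\underset{\mathbf{ab}}{\otimes}\mathrm{Id}$ and reduces everything to $\int^A A^*\otimes A^*\otimes\Lambda^2A\simeq\mathbb{Z}$ does not go through; and if you insert the correct, linearized presentation instead, the manipulation becomes circular (you recover the coend you started from).

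What actually finishes the proof is the exponential property of the exterior powers, which is where the paper invokes \cite{Franjou96}, proposition~1.4.2. Since $\oplus$ is both left and right adjoint to the diagonal $\mathbf{ab}\to\mathbf{ab}\times\mathbf{ab}$ and $\Lambda^\bullet(V\oplus W)\simeq\Lambda^\bullet(V)\otimes\Lambda^\bullet(W)$, one gets, integrally and already at the level of $\mathrm{Tor}_0$,
$$(F\otimes\mathrm{Id}^\vee)\underset{\mathbf{ab}}{\otimes}\Lambda^2\;\simeq\;\bigoplus_{p+q=2}\bigl(F\underset{\mathbf{ab}}{\otimes}\Lambda^p\bigr)\otimes\bigl(\mathrm{Id}^\vee\underset{\mathbf{ab}}{\otimes}\Lambda^q\bigr).$$
The summands $(p,q)=(0,2)$ and $(2,0)$ vanish because $F\underset{\mathbf{ab}}{\otimes}\Lambda^0=F(0)=0$ and $\mathrm{Id}^\vee\underset{\mathbf{ab}}{\otimes}\Lambda^0=\mathrm{Id}^\vee(0)=0$ ($F$ and $\mathrm{Id}^\vee$ being reduced), while the summand $(1,1)$ gives $\bigl(F\underset{\mathbf{ab}}{\otimes}\mathrm{Id}\bigr)\otimes\bigl(\mathrm{Id}^\vee\underset{\mathbf{ab}}{\otimes}\mathrm{Id}\bigr)\simeq F\underset{\mathbf{ab}}{\otimes}\mathrm{Id}$, the second factor being $\mathbb{Z}$ via the trace pairing. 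This decomposition is what your contraction map is implicitly projecting onto; without it (or an equivalent cross-effect argument), neither the isomorphism nor its integrality --- the $2$-torsion issue you rightly worry about --- is established.
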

 
Ce résultat est déjà essentiellement présent dans le travail \cite{K-Magnus} de Kawazumi ; il montre que, dès le degré homologique $1$, la situation diffère profondément entre foncteurs polynomiaux contravariants et covariants pour l'homologie stable des groupes d'automorphismes des groupes libres. Pour l'instant, nous ne savons pas traiter le cas du grand degré homologique pour les foncteurs contravariants ; à plus forte raison, la situation pour les bifoncteurs demeure très mystérieuse. La pertinence d'une telle généralisation aux bifoncteurs est illustrée par \cite{K-Magnus}, où Kawazumi introduit des classes de cohomologie reliées à la structure fine des groupes d'automorphismes des groupes libres appartenant à $H^i({\rm Aut}\,(\mathbb{Z}^{*n});{\rm Hom}\,(A,A^{\otimes i+1}))$, où $A$ désigne l'abélianisation du groupe $\mathbb{Z}^{*n}$.

\medskip

Venons-en aux méthodes que nous utilisons pour démontrer nos résultats.

 La stratégie de la preuve du théorème~\ref{thfi} est la suivante. On dispose d'une part de la catégorie  usuelle $\mathbf{gr}$ --- les foncteurs définis sur cette catégorie se prêtent à des calculs d'algèbre homologique --- et d'autre part d'une catégorie de groupes libres de rang fini au\-xi\-li\-aire $\G$ dont l'homologie calcule $\underset{n\in\mathbb{N}}{\col}H_*\big({\rm Aut}\,(\mathbb{Z}^{*n});F(\mathbb{Z}^{*n})\big)$. La catégorie $\G$ a les mêmes objets que $\mathbf{gr}$ mais ses morphismes sont les injections de facteurs libres avec un choix de complément. Pour obtenir le théorème~\ref{thfi}, on compare l'homologie des catégories $\mathbf{gr}$ et $\G$.

Plus précisément, la démonstration du théorème~\ref{thfi} se décompose en trois étapes :
\begin{enumerate}
\item On introduit à la section~\ref{section3} la catégorie $\G$ et le foncteur d'oubli $i : \G\to\mathbf{gr}$. Cette catégorie entre dans le cadre formel introduit dans l'article \cite{DV}, dont un des résultats généraux nous permet d'obtenir (dans la proposition~\ref{pr-dvgl}) un isomorphisme naturel
 $$\underset{n\in\mathbb{N}}{\col} H_*({\rm Aut}\,(\mathbb{Z}^{*n});F(\mathbb{Z}^{*n}))\xrightarrow{\simeq} H_*(\G\times G_\infty;F)$$
pour tout foncteur $F$ défini sur $\G$, où le groupe $G_\infty=\underset{n\in\mathbb{N}}{\col} {\rm Aut}\,(\mathbb{Z}^{*n})$ opère trivialement sur $F$. 

Cependant, les groupes d'homologie $ H_*(\G\times G_\infty;F)$ ne sont pas accessibles par un calcul direct.
\item L'étude homologique de la catégorie $\G$ repose sur des investigations préliminaires dans la catégorie $\mathbf{gr}$, qu'on peut scinder en deux principales étapes :
\begin{itemize}
\item  dans la section~\ref{section4}, on observe que le foncteur d'abélianisation $\mathfrak{a} : \mathbf{gr}\to\mathbf{Ab}$ possède une résolution projective explicite, que procure la résolution barre sur un groupe libre. On en tire, par la considération d'une homotopie explicite, un critère d'annulation de certains groupes d'homologie du type ${\rm Tor}^\mathbf{gr}_*(F,\mathfrak{a})$ (proposition~\ref{pr-ancoabst}).
\item Pour déduire de cette propriété du foncteur d'abélianisation des propriétés générales sur les foncteurs polynomiaux depuis la catégorie $\mathbf{gr}$, nous étudions, dans la section~\ref{section2}, la structure de ces foncteurs polynomiaux. En particulier, nous montrons qu'un tel foncteur  s'obtient par extensions successives de foncteurs (polynomiaux) se factorisant par le foncteur d'abélianisation. Ce résultat, dont la démonstration est indépendante des considérations d'homologie des foncteurs précitées, possède également un intérêt intrinsèque.
\end{itemize}
\item Dans la section~\ref{section5} nous comparons l'homologie de $\G$ et celle de $\mathbf{gr}$. Plus précisément, on montre que le foncteur  $i : \G\to\gr$ induit un isomorphisme 
$$H_*(\G;i^*F)\xrightarrow{\simeq} H_*(\gr;F)$$
pour $F$ un foncteur polynomial sur $\gr$ ; l'homologie $H_*(\gr;F)$ est nulle si $F$ est réduit car la catégorie $\gr$ possède un objet nul.

On établit cet isomorphisme à partir de la suite spectrale de Grothendieck dérivant l'extension de Kan à gauche du foncteur $i$ et des considérations d'algèbre homologique sur $\gr$ susmentionnées.

\end{enumerate}

Quant à la proposition~\ref{thi2}, nous la démontrons d'une manière différente, en u\-ti\-li\-sant la structure de l'abélianisation du noyau $IA_n$
de l'épimorphisme canonique ${\rm Aut}\,(\mathbb{Z}^{*n})\twoheadrightarrow GL_n(\mathbb{Z})$, donnée par exemple dans \cite{K-Magnus}, et les résultats précités sur l'homologie stable des groupes $GL_n(\mathbb{Z})$ à coefficients dans un bifoncteur polynomial. Comme l'homologie des $IA_n$ n'est pas connue au-delà du degré $1$ (des résultats partiels en degré $2$ ont toutefois été obtenus par Pettet dans \cite{Pet}), nous ne pouvons pas encore aller plus loin par cette méthode. Cependant nous pensons que l'approche fonctorielle devrait permettre d'aborder ces questions. Elle pourrait notamment clarifier et aider à démontrer les calculs conjecturaux que donne Randal-Williams dans \cite{RW}.

\paragraph*{Remerciements} Nous remercions Benoît Fresse dont les notes de groupe de travail {\em Applications polynomiales et foncteurs polynomiaux} ont été une source d'inspiration à la généralisation présentée dans la section~\ref{section2}.
Ces notes sont issues d'un cours de Teimuraz Pirashvili, que nous remercions également pour de nombreuses discussions fructueuses sur les foncteurs polynomiaux et l'homologie des foncteurs et pour nous avoir indiqué une difficulté qui nous avait échappé dans le diagramme de recollement du théorème \ref{th-pirarec} et qui donne lieu à la remarque~\ref{Rem-Pira}.

Nous sommes aussi reconnaissants envers Manfred Hartl, Gaël Collinet et Vincent Franjou pour des conversations dont ce travail a bénéficié.

Une partie de ces travaux ayant été effectuée au Max-Planck-Institut f\"ur Mathematik de Bonn, les  auteurs remercient cet institut pour son hospitalité.

\section{Notations et rappels}
Si $\C$ est une catégorie pointée (i.e. possédant un objet nul) ayant des coproduits finis, pour $E\in {\rm Ob}\,\C$, on note $\langle E\rangle_\C$ la sous-catégorie pleine de $\C$ ayant pour objets les sommes finies de copies de $E$.

Pour tout anneau $A$, on désigne par $A\md$ la catégorie des $A$-modules à gauche et par $A\mdf$ la sous-catégorie pleine des modules projectifs de type fini.

Le symbole $\kk$ désigne soit $\mathbb{Z}$, soit un corps premier.

On note $\mathbf{Gr}$ (respectivement $\mathbf{Ab}$) la catégorie des groupes (resp. des groupes abéliens) et $\gr$ (resp. $\mathbf{ab}$) la sous-catégorie pleine des groupes libres de type fini (resp. des groupes abéliens libres de rang fini). Autrement dit, $\gr=\langle\mathbb{Z}\rangle_{\mathbf{Gr}}$ et $\mathbf{ab}=\langle\mathbb{Z}\rangle_{\mathbf{Ab}}=\mathbb{Z}\mdf$.

On désigne par $\abl : \gr\to\kk\md$ le foncteur d'abélianisation tensorisée par~$\kk$.

\subsection{Catégories de foncteurs}
Si $\C$ est une catégorie (essentiellement) petite et $\A$ une catégorie, on note $\fct(\C,\A)$ la catégorie des foncteurs de $\C$ vers $\A$.

La catégorie $\fct(\C,\kk\md)$ est une catégorie abélienne avec limites et colimites se calculant au but ; elle possède assez d'objets injectifs et projectifs. Précisément, pour tout objet $c$ de $\C$, le foncteur $P^\C_c:=\kk[\C(c,-)]$ (on omettra souvent l'exposant $\C$ s'il n'y a pas d'ambiguïté possible) représente l'évaluation en $c$ grâce au lemme de Yoneda, il est donc projectif, et l'ensemble de ces foncteurs lorsque $c$ parcourt ${\rm Ob}\,\C$ engendre la catégorie $\fct(\C,\kk\md)$. En particulier, tout foncteur de $\fct(\C,\kk\md)$ possède une résolution par des sommes directes de projectifs $P^\C_c$.
On peut donc faire de l'algèbre homologique dans cette catégorie comme dans les catégories de modules ; on dispose notamment d'une notion d'{\em homologie de $\C$ à coefficients dans un foncteur} $F\in {\rm Ob}\,\fct(\C,\kk\md)$ --- cette homologie $H_*(\C;F)$ n'est autre que l'évaluation en $F$ des foncteurs dérivés à gauche du foncteur $\underset{\C}{\col} : \fct(\C,\kk\md)\to\kk\md$. On dispose également d'une notion de produit tensoriel au-dessus de $\C$, qui fournit un foncteur $-\underset{\C}{\otimes} - : \fct(\C^{op},\kk\md)\times\fct(\C,\kk\md)\to\kk\md$ qui en chaque variable commute aux colimites et peut se dériver (à gauche) pour donner des foncteurs ${\rm Tor}^\C_n : \fct(\C^{op},\kk\md)\times\fct(\C,\kk\md)\to\kk\md$. Le $\kk$-module $H_n(\C;F)$ s'identifie naturellement à ${\rm Tor}^\C_n(\kk,F)$ (où l'on voit $\kk$ comme foncteur constant à gauche). L'{\em homologie de la catégorie} $\C$ (sans plus de précision) est le $\kk$-module gradué $H_*(\C):=H_*(\C;\kk)\simeq {\rm Tor}^\C_*(\kk, \kk)$. On pourra consulter par exemple l'appendice~A.1 de \cite{Dja-JKT} pour davantage de rappels à ce sujet.

Si $\C$ est pointée, tout objet $F$ de $\fct(\C,\kk\md)$ possède une décomposition canonique $F\simeq F(0)\oplus \bar{F}$ où $F(0)$ désigne le foncteur constant en l'évaluation de $F$ sur l'objet nul et $\bar{F}(c)=Ker\,(F(c)\to F(0))\big(\simeq Coker\,(F(0)\to F(c))\big)$. Le foncteur $\bar{F}$ est {\em réduit}, c'est-à-dire nul en $0$ ; on l'appelle foncteur réduit associé à $F$.

\subsection{Foncteurs polynomiaux}

Soit $\C$ une petite catégorie pointée avec coproduits finis. On dispose d'une notion classique de foncteur polynomial dans $\fct(\C,\kk\md)$. Celle-ci remonte à Eilenberg et Mac Lane (cf. \cite{EML}, chap.~II), au moins dans le cas où $\C$ est la catégorie des groupes abéliens (la définition générale étant la même). On pourra également se reporter à \cite{HV}, §\,1, pour une exposition générale. Rappelons qu'un foncteur polynomial de degré au plus $n-1$ est un foncteur $F$ dont le $n$-ème effet croisé $cr_n(F)$, qui est un multifoncteur en $n$ variables sur $\C$, est nul. Les foncteurs polynomiaux de degré au plus $n$ de $\fct(\C,\kk\md)$ forment une sous-catégorie épaisse stable par limites et colimites ; on la notera $\pol_n(\C)$.

\medskip

Fixons un objet $E$ de $\C$ : les foncteurs polynomiaux sur la catégorie $\langle E\rangle_\C$ constituent un cas particulièrement important (cf. \cite{HV} et \cite{PJ}).
  
Tout foncteur $F : \langle E\rangle_\C\to\kk\md$ possède un plus grand quotient additif (i.e. polynomial de degré au plus $1$) et réduit, qu'on notera $\bar{T}_1(F)$. Il est explicitement donné par $\bar{T}_1(F)(V)=Coker\,(F(p_1)+F(p_2)-F(s) : F(V+V)\to F(V))$ où $+$ désigne le coproduit de $\C$ et $p_1, p_2, s: V+V\to V$ sont respectivement les première projection, deuxième projection et somme. Le $\kk$-module $\Lambda_\C(E):=\bar{T}_1(P_E)(E)$ est un quotient de $\kk[{\rm End}_\C(E)]$ ; on vérifie (cf. \cite{HV}, §\,3.2) que la structure de $\kk$-algèbre sur $\kk[{\rm End}_\C(E)]$ définit par passage au quotient une structure de $\kk$-algèbre sur $\Lambda_\C(E)$. De plus, l'action naturelle à droite de $\kk[{\rm End}_\C(E)]$ sur le foncteur $P_E$ induit une action (à droite) de $\Lambda_\C(E)$ sur le foncteur $\bar{T}_1(P_E)$.

\section{Foncteurs polynomiaux sur $\gr$} \label{section2}

Dans cette section, on donne un résultat de classification général sur les foncteurs polynomiaux (nous restons dans le cadre d'une catégorie source du type $\langle E\rangle_\C$, où $\C$ est une catégorie pointée avec coproduits finis), avant de l'appliquer à la catégorie $\fct(\gr,\kk\md)$. Ces résultats, qui présentent un intérêt intrinsèque, ne seront utilisés que dans la démonstration de la proposition~\ref{praux-fctpol} (uniquement via le corollaire~\ref{cor-recdeg} et la formule~(\ref{eq-alpha})).

On rappelle la définition suivante :

\begin{defi}
Si $R$ est un anneau muni d'une action (à gauche) d'un groupe $G$, on définit l'algèbre tordue de $G$ sur $R$ comme le $R$-module $R[G]$ muni de la multiplication donnée par $r[g].s[h]=(r\,^g\! s).[gh]$ pour tout $(r,s,g,h)\in R\times R\times G\times G$ (où $^g\! s$ désigne l'action de $g$ sur $s$) ; dans le cas de l'action du groupe symétrique $\mathfrak{S}_n$ sur le produit tensoriel $A^{\otimes n}$ de $n$ copies d'un anneau $A$ par permutation des facteurs, on notera $\mathfrak{S}_n\wr A$ l'anneau obtenu, appelé {\em produit en couronne de $A$ par $\mathfrak{S}_n$}. 
\end{defi}

Ainsi, pour tout entier $n$, $\bar{T}_1(P_E)^{\otimes n}$ est un foncteur polynomial de degré (au plus) $n$ réduit muni d'une action de $\mathfrak{S}_n\wr\Lambda_\C(E)$. Pour tout foncteur $F$ sur $\langle E\rangle_\C$, l'effet croisé $cr_n(F)(E,\dots,E)$ est muni d'une action naturelle de $\mathfrak{S}_n\wr\Lambda_\C(E)$, où le groupe symétrique opère par permutation des facteurs et l'action de $\Lambda_\C(E)^{\otimes n}$ est induite par celle de $\kk[{\rm End}_\C(E)]^{\otimes n}$ sur $F(E^{+n })$ (chacun des $n$ facteurs du produit tensoriel opérant par précomposition sur chacun des $n$ facteurs de la somme). On notera ${\rm cr}_n(F)$ le $\mathfrak{S}_n\wr\Lambda_\C(E)$-module $cr_n(F)(E,\dots,E)$.

Le résultat suivant est dû à Pirashvili (\cite{Pira-rec}) dans le cas où $\C$ est une catégorie de modules ; il doit beaucoup à \cite{HV} (qui étudie de façon bien plus précise la structure des foncteurs polynomiaux de degré $2$) pour le passage au cas général.

\begin{thm}\label{th-pirarec} Il existe un diagramme de recollement
$$\xymatrix{\pol_{n-1}(\langle E\rangle_\C)\ar[r]|-{\mathrm{incl}} & \pol_{n}(\langle E\rangle_\C)\ar[r]|-{{\rm cr}_n}\ar@/_/[l]_-l\ar@/^/[l]^-r & \big(\mathfrak{S}_n\wr\Lambda_\C(E)\big)\md\ar@/_/[l]_-{\alpha_n}\ar@/^/[l]^-{\beta_n}
}$$
où $\mathrm{incl}$ est l'inclusion et
$$\alpha_n(M)=\bar{T}_1(P_E)^{\otimes n}\underset{\mathfrak{S}_n\wr\Lambda_\C(E)}{\otimes} M\simeq\big(\bar{T}_1(P_E)^{\otimes n}\underset{\Lambda_\C(E)^{\otimes n}}{\otimes} M\big)_{\mathfrak{S}_n}.$$
\end{thm}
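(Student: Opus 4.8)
\section*{Proof proposal (plan)}

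The plan is to show that the displayed diagram is a \emph{recollement} of abelian categories: that $\mathrm{cr}_n$ is an exact functor whose kernel is exactly the essential image of $\mathrm{incl}$, and which carries the fully faithful left adjoint $\alpha_n$ and fully faithful right adjoint $\beta_n$. Once $\mathrm{cr}_n$, $\alpha_n$ and $\beta_n$ are in place, the reflection $l$ and coreflection $r$ of $\mathrm{incl}$ come for free from the general recollement formalism, since $\pol_{n-1}(\langle E\rangle_\C)=\ker(\mathrm{cr}_n)$ is a Serre subcategory. I would organize everything around a single co-representability statement for the top cross-effect, which is where Pirashvili's module-category argument must be transported to an arbitrary $\C$ by means of the degree-$1$ formalism of \cite{HV}.

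First I would record the formal inputs. The functor $\mathrm{incl}$ is fully faithful because $\pol_{n-1}$ is a full subcategory of $\pol_n$, and it is a Serre subcategory since (§1.2) it is thick and stable under limits and colimits; the relation $\mathrm{cr}_n\circ\mathrm{incl}=0$ is just the definition of degree $\le n-1$. Moreover $F\mapsto cr_n(F)(E,\dots,E)$ is the direct summand of $F\mapsto F(E^{+n})$ cut out by the inclusion--exclusion idempotents attached to the summands of $E^{+n}$, so $\mathrm{cr}_n$ is exact and preserves direct sums.

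The technical heart is a co-representability lemma: for every $F\in\pol_n(\langle E\rangle_\C)$ there is a natural isomorphism of left $\mathfrak{S}_n\wr\Lambda_\C(E)$-modules
$$\mathrm{cr}_n(F)\;\cong\;\mathrm{Hom}_{\pol_n(\langle E\rangle_\C)}\big(\bar T_1(P_E)^{\otimes n},\,F\big),$$
the module structure on the right being induced by the right $\mathfrak{S}_n\wr\Lambda_\C(E)$-action on $\bar T_1(P_E)^{\otimes n}$. Since $P_E^{\otimes n}\cong P_{E^{+n}}$ co-represents $F\mapsto F(E^{+n})$ and $\bar T_1(P_E)$ is the largest additive \emph{reduced} quotient of $P_E$, the reduced functor $\bar T_1(P_E)^{\otimes n}$ is the multi-additive summand computing $\mathrm{cr}_n$; reducedness is what kills the constant part and makes the identification exact. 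Matching the $\Lambda_\C(E)$-actions one variable at a time and the $\mathfrak{S}_n$-action by permutation of the tensor factors uses precisely the degree-$1$ machinery recalled above. In particular $\bar T_1(P_E)^{\otimes n}$ is a (compact) projective object of $\pol_n$. Granting the lemma, the tensor--hom adjunction yields
$$\mathrm{Hom}_{\pol_n}(\alpha_n(M),F)\;\cong\;\mathrm{Hom}_{\mathfrak{S}_n\wr\Lambda_\C(E)}\big(M,\,\mathrm{Hom}_{\pol_n}(\bar T_1(P_E)^{\otimes n},F)\big)\;\cong\;\mathrm{Hom}_{\mathfrak{S}_n\wr\Lambda_\C(E)}(M,\mathrm{cr}_n F),$$
so $\alpha_n\dashv\mathrm{cr}_n$; the dual computation, with invariants in place of coinvariants (legitimate since $\mathfrak{S}_n$ is finite), gives $\mathrm{cr}_n\dashv\beta_n$, the two descriptions of $\alpha_n$ in the statement agreeing through the standard comparison of $\otimes_{\mathfrak{S}_n\wr\Lambda_\C(E)}$ with $\mathfrak{S}_n$-coinvariants of $\otimes_{\Lambda_\C(E)^{\otimes n}}$.

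It remains to see that $\alpha_n,\beta_n$ are fully faithful and that $\ker(\mathrm{cr}_n)=\pol_{n-1}$. For the former I would compute, from the lemma, that $\mathrm{cr}_n(\bar T_1(P_E)^{\otimes n})\cong\mathfrak{S}_n\wr\Lambda_\C(E)$ as a bimodule, i.e. $\mathrm{End}_{\pol_n}(\bar T_1(P_E)^{\otimes n})\cong\mathfrak{S}_n\wr\Lambda_\C(E)$ (there are no homomorphisms mixing distinct tensor factors); since $\mathrm{cr}_n$ commutes with $-\otimes_{\mathfrak{S}_n\wr\Lambda_\C(E)}M$ (compactness of the projective $\bar T_1(P_E)^{\otimes n}$), this gives $\mathrm{cr}_n\alpha_n\cong\mathrm{Id}\cong\mathrm{cr}_n\beta_n$. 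For the kernel, $\pol_{n-1}\subseteq\ker(\mathrm{cr}_n)$ is the definition, and conversely, for $F\in\pol_n$ the multifunctor $cr_n(F)$ is additive in each variable, hence determined by its value at $(E,\dots,E)$, so $\mathrm{cr}_n(F)=0$ forces $cr_n(F)=0$ and $F\in\pol_{n-1}$. With $\pol_{n-1}=\ker(\mathrm{cr}_n)$ a Serre subcategory whose quotient is $\mathfrak{S}_n\wr\Lambda_\C(E)\md$, the recollement formalism produces $l(F)=\mathrm{coker}\big(\alpha_n\mathrm{cr}_n(F)\to F\big)$ and $r(F)=\ker\big(F\to\beta_n\mathrm{cr}_n(F)\big)$, which land in $\pol_{n-1}$ (apply the exact $\mathrm{cr}_n$ and use $\mathrm{cr}_n\alpha_n\cong\mathrm{Id}\cong\mathrm{cr}_n\beta_n$) and are respectively left and right adjoint to $\mathrm{incl}$. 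The main obstacle is the co-representability lemma: realizing the top cross-effect as maps out of $\bar T_1(P_E)^{\otimes n}$ with the correct wreath-product action over an arbitrary $\C$, the passage from modules to the general case resting entirely on the additive reflection $\bar T_1$ and the algebra $\Lambda_\C(E)$.
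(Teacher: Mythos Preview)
Your plan is a detailed unpacking of what the paper merely sketches (it cites \cite{HV} for $n=1$ and says the general case follows either by multilinearization or by a direct cross-effect computation). The co-representability lemma $\mathrm{cr}_n(F)\cong\mathrm{Hom}_{\pol_n}(\bar T_1(P_E)^{\otimes n},F)$ that you isolate is indeed the heart of the direct approach, and your deduction of $\alpha_n\dashv\mathrm{cr}_n$ from it, the computation $\mathrm{cr}_n(\bar T_1(P_E)^{\otimes n})\cong\mathfrak{S}_n\wr\Lambda_\C(E)$, the identification $\ker(\mathrm{cr}_n)=\pol_{n-1}$, and the recovery of $l,r$ from the recollement formalism are all correct.

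The one genuine gap is the right adjoint. You write that ``the dual computation, with invariants in place of coinvariants (legitimate since $\mathfrak{S}_n$ is finite), gives $\mathrm{cr}_n\dashv\beta_n$'', but there is no such dual available from what you have set up. Your lemma expresses $\mathrm{cr}_n$ as a $\mathrm{Hom}$ out of a fixed object; tensor--Hom then produces its \emph{left} adjoint, and nothing formal converts this into a right adjoint. Finiteness of $\mathfrak{S}_n$ only equates induction and coinduction along $\Lambda_\C(E)^{\otimes n}\hookrightarrow\mathfrak{S}_n\wr\Lambda_\C(E)$; it does not address the substantive point, namely that $\bar T_1(P_E)^{\otimes n}\otimes_{\Lambda_\C(E)^{\otimes n}}(-)$ is \emph{also} right adjoint to $\mathrm{cr}_n$ already over $\Lambda_\C(E)^{\otimes n}$, before $\mathfrak{S}_n$ enters at all. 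This is precisely what the paper's alternative ``multilin\'eariser'' buys: through the equivalence between multi-additive multi-reduced multifunctors on $\langle E\rangle_\C^{\,n}$ and $\Lambda_\C(E)^{\otimes n}$-modules, $\mathrm{cr}_n$ becomes an equivalence modulo $\pol_{n-1}$ whose inverse is the diagonal $G\mapsto\big(V\mapsto G(V,\dots,V)\big)$, and then the ``calcul direct d'effet crois\'e'' that the paper invokes treats the two (co)unit conditions symmetrically. As your argument stands, you have verified that $\beta_n$ is a \emph{section} of $\mathrm{cr}_n$ (you do check $\mathrm{cr}_n\beta_n\cong\mathrm{Id}$), but not that it is the right adjoint; that step still needs an argument.
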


Rappelons ce que signifie que le diagramme précédent de foncteurs entre catégories abéliennes est un diagramme de recollement (cf. par exemple \cite{K2}, §\,2, et \cite{FP} pour une présentation des diagrammes de recollement dans les catégories abéliennes et une utilisation dans les catégories de foncteurs)
\begin{enumerate}
 \item le foncteur incl est adjoint à droite (resp. à gauche) au foncteur $l$ (resp. $r$) ; de plus, l'unité $Id\to r.\mathrm{incl}$ est un isomorphisme ;
\item le foncteur $\mathrm{cr}_n$ est adjoint à droite (resp. à gauche) au foncteur $\alpha_n$ (resp. $\beta_n$) ; de plus, l'unité $Id\to\mathrm{cr}_n .\alpha_n$ est un isomorphisme ;
\item le foncteur incl est pleinement fidèle et son image essentielle est le noyau du foncteur $\mathrm{cr}_n$.
\end{enumerate}
Remarquons que la définition de diagramme de recollement donnée dans \cite{K2} comporte des conditions redondantes. En effet, d'après les propositions $3.4.1$ et $3.4.2$ de \cite{BorceuxI}, la coünité de l'adjonction entre ${\rm cr}_n$ et $\beta_n$ est un isomorphisme si et seulement si l'unité de l'adjonction entre $\mathrm{cr}_n$ et $\alpha_n$ est un isomorphisme. De même pour les adjonctions entre $\mathrm{incl}$, $l$ et $r$.

Rappelons également que ces propriétés impliquent formellement que le foncteur $\mathrm{cr}_n$ induit une équivalence de catégories
$$\pol_{n}(\langle E\rangle_\C)/\pol_{n-1}(\langle E\rangle_\C)\xrightarrow{\simeq}\mathfrak{S}_n\wr\Lambda_\C(E)\md.$$

\begin{proof}
 Pour le cas crucial du degré $1$, voir \cite{HV}, Théorème $3.12$. 
 
 Pour le cas général : comme $\pol_{n-1}(\langle E\rangle_\C)$ est une catégorie de Grothendieck et que $\pol_{n-1}(\langle E\rangle_\C)$ est stable par limites et colimites, on obtient l'existence des adjoints à droite et à gauche au foncteur $\mathrm{incl}$ et on montre facilement que l'unité $Id\to r.\mathrm{incl}$ est un isomorphisme, ce qui démontre le point $1$.
 
Le point $3.$ découle de la définition de $\pol_{n-1}(\langle E\rangle_\C)$ et du fait que c'est une sous-catégorie pleine de $\pol_{n}(\langle E\rangle_\C)$.

Pour la partie droite du diagramme : pour $F \in  \pol_{n}(\langle E\rangle_\C)$, on a un isomorphisme naturel $\mathfrak{S}_n$-équivariant $\mathrm{Hom}(T_1(P_E)^{\otimes n}, F) \simeq F(E^{+ n})$ (qui est induit par le lemme de Yoneda et l'isomorphisme $T_n(\bar{P_E}^{\otimes n}) \simeq T_1(\bar{P_E})^{\otimes n}$ --- voir \cite{HV} pour une démonstration détaillée dans le cas $n=2$). Cet isomorphisme induit un isomorphisme naturel et $\mathfrak{S}_n \wr\Lambda_\C(E)$-équivariant $\mathrm{Hom}(\bar{T_1}(P_E)^{\otimes n}, F) \simeq \mathrm{cr}_n(F)$. On en déduit les isomorphismes naturels :
$$\mathrm{Hom}_{\mathfrak{S}_n \wr\Lambda_\C(E) \md}(M, \mathrm{cr}_n(F)) \simeq \mathrm{Hom}_{\mathfrak{S}_n \wr\Lambda_\C(E)}(M, \mathrm{Hom}_{ \pol_{n}(\langle E\rangle_\C)}(\bar{T_1}(P_E)^{\otimes n}, F))$$
$$ \simeq \mathrm{Hom}_{ \pol_{n}(\langle E\rangle_\C)}(\bar{T}_1(P_E)^{\otimes n}\underset{\mathfrak{S}_n\wr\Lambda_\C(E)}{\otimes} M,F)$$
où la dernière équivalence est l'adjonction usuelle entre le Hom externe et $\otimes$.
Pour $M \in \mathfrak{S}_n \wr\Lambda_\C(E)\md$ on a les isomorphismes naturels :
$$\mathrm{cr}_n\alpha_n(M)=\mathrm{cr}_n\big((\bar{T}_1(P_E)^{\otimes n}\underset{\Lambda_\C(E)^{\otimes n}}{\otimes} M)_{\mathfrak{S}_n} \big) \simeq \bar{T}_1(P_E)(E)^{\otimes n}\underset{\Lambda_\C(E)^{\otimes n}}{\otimes} M \simeq M.$$
On en déduit que l'unité de l'adjonction entre $\mathrm{cr}_n$ et $\alpha_n$ est un isomorphisme.

Enfin, comme les catégories $\pol_{n}(\langle E\rangle_\C)$ et $\big(\mathfrak{S}_n\wr\Lambda_\C(E)\big)\md$ sont des catégories de Grothendieck et que le foncteur $\mathrm{cr}_n$ commute aux colimites ce foncteur possède un adjoint à droite. 

\end{proof}

\begin{rem} \label{Rem-Pira}
L'adjoint à droite $\beta_n$ du foncteur ${\rm cr}_n$ est isomorphe à 
$$M \mapsto \big(\bar{T}_1(P_E)^{\otimes n}\underset{\Lambda_\C(E)^{\otimes n}}{\otimes} M\big)^{\mathfrak{S}_n}$$
{\em lorsque la catégorie source est additive}, mais ce n'est plus le cas en général (notamment pour la catégorie $\mathbf{gr}$ qui nous intéresse).
\end{rem}

\begin{cor}\label{cor-recdeg}
 Soit $F\in {\rm Ob}\,\pol_n(\langle E\rangle_\C)$ ; notons $M$ le $\mathfrak{S}_n\wr\Lambda_\C(E)$-module $\mathrm{cr}_n(F)$. Les noyaux et conoyaux de la coünité $\alpha_n(M)\to F$ et de l'unité $F\to\beta_n(M)$ sont de degré inférieur ou égal à $n-1$. 
\end{cor}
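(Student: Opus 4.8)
The plan is to apply the functor $\mathrm{cr}_n$ to each of the two morphisms and to show that it turns them into isomorphisms; since, by point~3 of the recollement, $\pol_{n-1}(\langle E\rangle_\C)$ is exactly the class of objects annihilated by $\mathrm{cr}_n$ (the essential image of $\mathrm{incl}$ being the kernel of $\mathrm{cr}_n$), this forces the kernels and cokernels in question to lie in $\pol_{n-1}(\langle E\rangle_\C)$, i.e.\ to be of degree at most $n-1$. Two preliminary facts make this work. First, $\pol_n(\langle E\rangle_\C)$ is stable under limits and colimits, hence under kernels and cokernels, so the kernels and cokernels of $\alpha_n(M)\to F$ and of $F\to\beta_n(M)$ are genuine objects of $\pol_n(\langle E\rangle_\C)$ on which evaluating $\mathrm{cr}_n$ is legitimate. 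Second, $\mathrm{cr}_n$ is exact: the $\kk$-module $\mathrm{cr}_n(F)=cr_n(F)(E,\dots,E)$ is a natural direct summand of $F(E^{\oplus n})$, evaluation functors $\fct(\langle E\rangle_\C,\kk\md)\to\kk\md$ are exact, and a natural retract of an exact functor is exact.

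The heart of the argument is then the triangle identities of the two adjunctions furnished by the recollement. For the adjunction $\alpha_n\dashv\mathrm{cr}_n$, write $\eta:\mathrm{Id}\to\mathrm{cr}_n\alpha_n$ for the unit and $\varepsilon:\alpha_n\mathrm{cr}_n\to\mathrm{Id}$ for the counit, so that the first morphism is $\varepsilon_F:\alpha_n(M)\to F$. The triangle identity gives $\mathrm{cr}_n(\varepsilon_F)\circ\eta_M=\mathrm{id}_M$, and by point~2 of the recollement the unit $\eta$ is an isomorphism; hence $\mathrm{cr}_n(\varepsilon_F)$ is an isomorphism. Symmetrically, for $\mathrm{cr}_n\dashv\beta_n$, with unit $\eta':\mathrm{Id}\to\beta_n\mathrm{cr}_n$ and counit $\varepsilon':\mathrm{cr}_n\beta_n\to\mathrm{Id}$, the second morphism is $\eta'_F:F\to\beta_n(M)$, and the triangle identity $\varepsilon'_M\circ\mathrm{cr}_n(\eta'_F)=\mathrm{id}_M$, combined with the fact that $\varepsilon'$ is an isomorphism (again point~2), shows that $\mathrm{cr}_n(\eta'_F)$ is an isomorphism.

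It then remains to conclude by exactness. Factoring $\varepsilon_F$ through its image and applying the exact functor $\mathrm{cr}_n$ to the two short exact sequences $0\to\ker\varepsilon_F\to\alpha_n(M)\to\mathrm{im}\,\varepsilon_F\to 0$ and $0\to\mathrm{im}\,\varepsilon_F\to F\to\mathrm{coker}\,\varepsilon_F\to 0$, the fact that $\mathrm{cr}_n(\varepsilon_F)$ is an isomorphism forces $\mathrm{cr}_n(\ker\varepsilon_F)=0$ and $\mathrm{cr}_n(\mathrm{coker}\,\varepsilon_F)=0$, so by point~3 of the recollement both objects lie in $\pol_{n-1}(\langle E\rangle_\C)$; the identical argument applied to $\eta'_F$ settles the unit. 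The whole reasoning is essentially formal once the two preliminary facts are granted, and the only point demanding care — the closest thing to an obstacle — is the exactness of $\mathrm{cr}_n$, which is precisely what lets one deduce the vanishing of $\mathrm{cr}_n$ on the kernels and cokernels from the isomorphism status of $\mathrm{cr}_n$ applied to the (co)units.
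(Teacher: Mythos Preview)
Your argument is correct and is precisely the standard formal deduction from a recollement that the paper has in mind when stating this as an unproved corollary: apply $\mathrm{cr}_n$, use the triangle identities together with the invertibility of the unit/counit to see $\mathrm{cr}_n$ sends $\varepsilon_F$ and $\eta'_F$ to isomorphisms, and conclude by exactness of $\mathrm{cr}_n$ that its value on kernels and cokernels vanishes. One small simplification: you need not invoke the direct-summand description of $\mathrm{cr}_n$ for exactness, since having both a left and a right adjoint (points~2 of the recollement) already forces $\mathrm{cr}_n$ to be exact.
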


\begin{pr}\label{pr-compdr}
 Soient $F : \C\to\D$ un foncteur entre catégories pointées avec sommes finies qui commute aux sommes finies, $E$ un objet de $\C$ et $E'=F(E)$. On suppose que $F$ est plein et induit un isomorphisme $\C(E,E)\to\D(E',E')$. Alors $F$ induit un isomorphisme $\Lambda_\C(E)\xrightarrow{\simeq}\Lambda_\D(E')$ et une équivalence de catégories :
$$\pol_n(\langle E\rangle_\C)/\pol_{n-1}(\langle E\rangle_\C)\xrightarrow{\simeq}\pol_n(\langle E'\rangle_\D)/\pol_{n-1}(\langle E'\rangle_\D)$$
pour tout $n\in\mathbb{N}$.
\end{pr}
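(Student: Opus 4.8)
Le plan est d'exploiter la compatibilité entre le diagramme de recollement du théorème~\ref{th-pirarec} pour $\langle E\rangle_\C$ et celui pour $\langle E'\rangle_\D$. La première chose à faire est d'établir l'isomorphisme $\Lambda_\C(E)\xrightarrow{\simeq}\Lambda_\D(E')$. Par définition, $\Lambda_\C(E)=\bar{T}_1(P_E)(E)$ est un quotient explicite de $\kk[{\rm End}_\C(E)]$ par l'image du morphisme $F(p_1)+F(p_2)-F(s)$ évalué convenablement. L'hypothèse que $F$ induit un isomorphisme $\C(E,E)\to\D(E',E')$ donne déjà un isomorphisme $\kk[{\rm End}_\C(E)]\xrightarrow{\simeq}\kk[{\rm End}_\D(E')]$ ; comme $F$ commute aux sommes finies, on a $F(E+E)=E'+E'$ et $F$ transporte les projections et la somme de $\langle E\rangle_\C$ sur celles de $\langle E'\rangle_\D$. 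Il faudra vérifier que $F$ identifie les deux sous-modules par lesquels on quotiente, ce qui requiert que $F$ induise un isomorphisme $\C(E+E,E)\to\D(E'+E',E')$. C'est ici qu'intervient de façon essentielle l'hypothèse de \emph{plénitude} combinée à l'isomorphisme en $\C(E,E)$ : d'abord $F$ est plein donc surjectif sur les morphismes, et l'injectivité sur $\C(E+E,E)\simeq\C(E,E)^{\times 2}$ (via l'isomorphisme canonique donné par le coproduit) se déduit de l'isomorphisme sur $\C(E,E)$ puisque $F$ respecte ce coproduit. On obtient ainsi que $\Lambda_\C(E)\to\Lambda_\D(E')$ est un isomorphisme de $\kk$-algèbres.

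Une fois cet isomorphisme acquis, je propose d'en déduire un isomorphisme d'anneaux $\mathfrak{S}_n\wr\Lambda_\C(E)\xrightarrow{\simeq}\mathfrak{S}_n\wr\Lambda_\D(E')$, de sorte que les catégories de modules cibles des deux diagrammes de recollement sont canoniquement identifiées. Il suffit alors de comparer les deux équivalences de catégories
$$\pol_n(\langle E\rangle_\C)/\pol_{n-1}(\langle E\rangle_\C)\xrightarrow{\simeq}\mathfrak{S}_n\wr\Lambda_\C(E)\md$$
et l'analogue pour $\D$, toutes deux induites par ${\rm cr}_n$ d'après la dernière assertion du théorème~\ref{th-pirarec}. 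Le foncteur $F$ induit un foncteur de précomposition $F^*:\fct(\langle E'\rangle_\D,\kk\md)\to\fct(\langle E\rangle_\C,\kk\md)$ (ou plutôt son adjoint, une extension de Kan, dans le sens qui convient) préservant la polynomialité et sa filtration, d'où un foncteur entre quotients. La tâche est de montrer que ce foncteur est une équivalence et qu'il est compatible avec les identifications aux catégories de modules, autrement dit que le carré reliant les deux foncteurs ${\rm cr}_n$ à travers $F$ commute à isomorphisme près.

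L'obstacle principal sera cette vérification de compatibilité : il faut s'assurer que le calcul de l'effet croisé ${\rm cr}_n$ est \og{}insensible\fg{} au passage de $\langle E\rangle_\C$ à $\langle E'\rangle_\D$ via $F$. Concrètement, ${\rm cr}_n(F')(E',\dots,E')$ pour un foncteur $F'$ sur $\langle E'\rangle_\D$ s'exprime à partir des évaluations de $F'$ sur les sommes $E'+\cdots+E'$ et des morphismes structuraux (inclusions, projections, codiagonales), et l'hypothèse sur $F$ garantit que tous ces objets et morphismes proviennent de $\langle E\rangle_\C$. Le point délicat est que $F$ n'est pas supposé fidèle en général, seulement plein avec isomorphisme sur $\C(E,E)$ ; il faudra donc montrer que la non-fidélité éventuelle sur les morphismes entre sommes de copies de $E$ ne gêne pas, ce qui devrait résulter de ce que l'effet croisé ne fait intervenir que la structure de coproduit et les endomorphismes de $E$. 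Une fois la plénitude et l'isomorphisme sur ${\rm End}$ correctement propagés aux sommes finies, la commutativité du diagramme et donc l'équivalence annoncée en découleront formellement.
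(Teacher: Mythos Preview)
Votre plan est globalement correct et suit la même stratégie que l'article, mais deux points méritent correction ou simplification.

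\textbf{Premier point : l'ensemble de morphismes en jeu.} Dans le calcul de $\Lambda_\C(E)=\bar{T}_1(P_E)(E)$, le domaine du morphisme dont on prend le conoyau est $P_E(E+E)=\kk[\C(E,E+E)]$, et non $\kk[\C(E+E,E)]$. Votre identification $\C(E+E,E)\simeq\C(E,E)^{\times 2}$ par la propriété universelle du coproduit est juste, mais elle concerne le mauvais ensemble. Sur le bon ensemble $\C(E,E+E)$, aucune décomposition de ce type n'est disponible et $F$ n'induit a priori qu'une \emph{surjection} (par plénitude), pas un isomorphisme. C'est toutefois suffisant : avec l'isomorphisme sur $\kk[\C(E,E)]$, un argument de type lemme des cinq sur les présentations en conoyau donne l'isomorphisme $\Lambda_\C(E)\xrightarrow{\simeq}\Lambda_\D(E')$. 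C'est exactement ainsi que procède l'article (il invoque le lemme des cinq).

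\textbf{Second point : la deuxième moitié est inutilement compliquée.} Une fois acquis l'isomorphisme sur les $\Lambda$, le théorème~\ref{th-pirarec} identifie \emph{chacun} des deux quotients $\pol_n/\pol_{n-1}$ à la même catégorie de modules $\mathfrak{S}_n\wr\Lambda_\C(E)\md$ ; l'équivalence recherchée s'obtient donc directement en composant ces deux identifications, sans avoir à construire une extension de Kan ni à vérifier sa compatibilité avec ${\rm cr}_n$. L'\og{}obstacle principal\fg{} que vous identifiez (la non-fidélité éventuelle de $F$) disparaît alors de lui-même : la seule chose à tirer de $F$ est l'isomorphisme $\Lambda_\C(E)\simeq\Lambda_\D(E')$, pour lequel plénitude et bijection sur les endomorphismes suffisent. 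C'est le raccourci qu'emprunte l'article, dont la preuve tient en une phrase après l'isomorphisme sur $\Lambda$.
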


\begin{proof}
 On déduit directement le fait que $F$ induit un isomorphisme d'anneaux $\Lambda_\C(E)\xrightarrow{\simeq}\Lambda_\D(E')$ de la définition, des hypothèses et du lemme des cinq. Le reste en découle par le théorème précédent.
\end{proof}

Le critère de comparaison général qui précède s'applique aux groupes libres, abéliens ou non :

\begin{cor}\label{cor-quotpol}
 Le foncteur d'abélianisation $\gr\to\mathbf{ab}$ induit des équivalences de catégories :
$$\pol_n(\gr)/\pol_{n-1}(\gr)\simeq\pol_n(\mathbf{ab})/\pol_{n-1}(\mathbf{ab})\simeq\kk[\mathfrak{S}_n]\md.$$
\end{cor}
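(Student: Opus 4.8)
Le plan est de combiner la proposition~\ref{pr-compdr}, appliquée au foncteur d'abélianisation, avec le théorème~\ref{th-pirarec} appliqué à $\mathbf{ab}$, puis d'identifier le produit en couronne qui en résulte. Pour la première équivalence, j'appliquerais la proposition~\ref{pr-compdr} au foncteur d'abélianisation $\mathbf{Gr}\to\mathbf{Ab}$, avec $E=\mathbb{Z}$ (groupe libre de rang~$1$) et $E'=\mathbb{Z}$ (son abélianisé). Les trois hypothèses se vérifient directement. D'abord, l'abélianisation commute aux coproduits finis, étant adjointe à gauche de l'inclusion $\mathbf{Ab}\hookrightarrow\mathbf{Gr}$ (concrètement, l'abélianisé d'un produit libre $\mathbb{Z}^{*p}*\mathbb{Z}^{*q}$ est la somme directe $\mathbb{Z}^p\oplus\mathbb{Z}^q$) ; en particulier elle se restreint en un foncteur $\gr\to\mathbf{ab}$. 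Ensuite, elle est pleine : tout morphisme de groupes abéliens $\mathbb{Z}^p\to\mathbb{Z}^q$, c'est-à-dire toute matrice entière $(a_{ji})$, se relève, grâce à la liberté des objets de $\gr$, en un morphisme de groupes libres envoyant le $i$-ème générateur $x_i$ sur le mot $y_1^{a_{1i}}\cdots y_q^{a_{qi}}$. Enfin, elle induit un isomorphisme sur ${\rm End}(\mathbb{Z})$, puisque $\mathbb{Z}$ est déjà abélien : les deux monoïdes d'endomorphismes valent $\mathbb{Z}$ et le morphisme induit est l'identité. La proposition~\ref{pr-compdr} fournit alors à la fois l'isomorphisme $\Lambda_{\gr}(\mathbb{Z})\xrightarrow{\simeq}\Lambda_{\mathbf{ab}}(\mathbb{Z})$ et la première équivalence annoncée.

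Pour la seconde équivalence, j'appliquerais le théorème~\ref{th-pirarec} à $\C=\mathbf{Ab}$ et $E=\mathbb{Z}$, ce qui donne $\pol_n(\mathbf{ab})/\pol_{n-1}(\mathbf{ab})\simeq\mathfrak{S}_n\wr\Lambda_{\mathbf{ab}}(\mathbb{Z})\md$. Il reste alors à calculer $\Lambda_{\mathbf{ab}}(\mathbb{Z})$. Le projectif standard est $P_{\mathbb{Z}}(V)=\kk[\mathbf{ab}(\mathbb{Z},V)]=\kk[V]$, le $\kk$-module libre sur l'ensemble sous-jacent à $V$. En appliquant la formule explicite donnant $\bar{T}_1$, le conoyau de $F(p_1)+F(p_2)-F(s)$ identifie $\kk[V]$ au $\kk$-module engendré par les $[v]$ soumis aux relations $[v+w]=[v]+[w]$ (et $[0]=0$), c'est-à-dire au foncteur $V\mapsto\kk\otimes_{\mathbb{Z}}V$ via $[v]\mapsto 1\otimes v$. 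En évaluant en $V=\mathbb{Z}$ on obtient $\Lambda_{\mathbf{ab}}(\mathbb{Z})=\kk\otimes_{\mathbb{Z}}\mathbb{Z}=\kk$. Comme $\kk^{\otimes n}=\kk$ est muni de l'action triviale de $\mathfrak{S}_n$, la torsion dans le produit en couronne disparaît et $\mathfrak{S}_n\wr\kk=\kk[\mathfrak{S}_n]$, d'où la seconde équivalence.

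L'obstacle principal me paraît résider dans la vérification soigneuse des hypothèses de la proposition~\ref{pr-compdr} pour l'abélianisation — surtout le caractère plein, qui repose essentiellement sur la liberté des objets de $\gr$ — et dans l'identification explicite de $\Lambda_{\mathbf{ab}}(\mathbb{Z})$ comme $\kk$, qui garantit la trivialité de l'action de $\mathfrak{S}_n$ sur $\Lambda_{\mathbf{ab}}(\mathbb{Z})^{\otimes n}$. Une fois ces deux points établis, l'énoncé découle formellement des deux résultats précédents.
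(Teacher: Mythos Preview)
Votre démonstration est correcte et suit exactement la même démarche que celle du papier : appliquer la proposition~\ref{pr-compdr} au foncteur d'abélianisation pour la première équivalence, puis le théorème~\ref{th-pirarec} et le calcul $\bar{T}_1(P^{\mathbf{ab}}_{\mathbb{Z}})\simeq\kk\otimes_{\mathbb{Z}}(-)$ (d'où $\Lambda_{\mathbf{ab}}(\mathbb{Z})\simeq\kk$) pour la seconde. Vous avez simplement explicité les vérifications (plénitude, commutation aux coproduits, calcul du quotient additif) que le papier laisse au lecteur.
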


\begin{proof}
 Le foncteur d'abélianisation vérifie les conditions de la proposition précédente. La dernière équivalence est classique et s'obtient à partir du théorème~\ref{th-pirarec} et du calcul $\bar{T}_1(P_{\mathbb{Z}}^{\mathbf{ab}})=\abl$ qui implique $\Lambda_\mathbf{ab}(\mathbb{Z})\simeq\kk$.
\end{proof}

En particulier, on voit que le foncteur $\alpha_n$ est donné, lorsque la catégorie source est $\gr$, par
\begin{equation}\label{eq-alpha}
\alpha_n(M)=\abl^{\otimes n}\underset{\mathfrak{S}_n}{\otimes} M.
\end{equation}

Le corollaire suivant montre que les foncteurs polynomiaux sur $\gr$ ne sont <<~pas loin~>> de se factoriser par l'abélianisation $\gr\to\mathbf{ab}$ : ils peuvent tous s'obtenir par extensions successives de foncteurs possédant une telle factorisation.

\begin{cor}\label{corpolfil}
 Tout foncteur $F\in {\rm Ob}\,\pol_n(\gr)$ possède une filtration
$$0=F_{-1} \subset F_0 \subset F_1\subset\dots\subset F_{n-1}\subset F_n=F$$
telle que chaque sous-quotient $F_i/F_{i-1}$ soit de degré au plus $n-i$ et appartienne à l'image essentielle du foncteur $\fct(\mathbf{ab},\kk\md)\to\fct(\gr,\kk\md)$ de précomposition par l'abélianisation.
\end{cor}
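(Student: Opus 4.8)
The plan is to construct the filtration by descending induction on the polynomial degree, using the relève equivalence of Corollaire \ref{cor-quotpol} at each stage. Let me think about how the pieces fit together.

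Let me reconsider. We want each subquotient $F_i/F_{i-1}$ to factor through abelianization. The key input is Corollaire \ref{cor-quotpol}, which says the quotient category $\pol_i(\gr)/\pol_{i-1}(\gr)$ is equivalent to the corresponding quotient for $\mathbf{ab}$, induced by precomposition with abelianization. So morally, "modulo lower degree", every degree-$i$ functor on $\gr$ looks like a degree-$i$ functor on $\mathbf{ab}$.

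Here is the approach. First I would set $F_{n-1}$ to be a suitable subfunctor of $F = F_n$ of degree at most $n-1$, chosen so that the quotient $F/F_{n-1}$ factors through abelianization; then I would iterate on $F_{n-1}$. Concretely, by Corollaire \ref{cor-recdeg} applied with $M = \mathrm{cr}_n(F)$, the unit $F \to \beta_n(M)$ has kernel and cokernel of degree at most $n-1$. By the explicit formula \eqref{eq-beta}, $\beta_n(M) = (\abl^{\otimes n} \otimes M)^{\mathfrak{S}_n}$ manifestly factors through abelianization. Thus I would take $F_{n-1} := \ker(F \to \beta_n(M))$, which is of degree $\le n-1$ and is a subfunctor of $F$; but the quotient $F/F_{n-1}$ is the image of the unit in $\beta_n(M)$, which is a \emph{subfunctor} of something factoring through abelianization, hence itself factors through abelianization (precomposition with a fixed functor is exact and the image essentielle is closed under subobjects here since abelianization is full and surjective on objects, so a subfunctor of $G \circ \abl$ pulls back to a subfunctor through $\abl$).

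The main obstacle I anticipate is checking precisely that the class of functors factoring through abelianization — the image essentielle of $\fct(\mathbf{ab},\kk\md) \to \fct(\gr,\kk\md)$ — is stable under passing to subobjects (and that the degree bookkeeping is consistent). This requires that the precomposition functor by $\abl : \gr \to \mathbf{ab}$ reflects and detects subfunctors appropriately: since abelianization is full and the identity on objects (after identifying objects via rank), a subfunctor of $i^*G$ is the pullback of a subfunctor of $G$, because any natural subobject automatically respects the relations making morphisms factor through $\mathbf{ab}$. Once this stability is established, the induction is immediate: apply the same construction to $F_{n-1} \in \pol_{n-1}(\gr)$ to produce $F_{n-2}$, and so on, terminating at $F_{-1} = 0$ since a reduced degree-$0$ functor (or the trivial subfunctor) vanishes. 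I would organize the induction so that at step $i$ the subquotient $F_i/F_{i-1}$ is exhibited directly as a subfunctor of $\beta_i(\mathrm{cr}_i(F_i))$, giving the required factorization through $\mathbf{ab}$ for free.
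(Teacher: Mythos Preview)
Your argument is correct and is exactly the paper's proof: set $F_{n-1}:=\ker\big(F\to\beta_n(\mathrm{cr}_n(F))\big)$, use Corollaire~\ref{cor-recdeg} for the degree bound, observe that $F/F_{n-1}$ embeds in $\beta_n(\mathrm{cr}_n(F))$ which factors through $\abl$ by~\eqref{eq-beta}, and iterate. The paper simply asserts the stability of the image essentielle under subobjects without comment, whereas you spell out the justification (which is indeed immediate from fullness of $\abl$).
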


\begin{proof}
On raisonne par récurrence sur le degré $n$ du foncteur $F$. Pour $n=0$ le résultat est trivial. Pour  $n>0$, on note $F_0$ l'image de la co\"unité :
$$\alpha_n(\mathrm{cr}_n(F))\to F.$$
Par le corollaire \ref{cor-recdeg}, le quotient $G=F/F_0$ est dans ${\rm Ob}\,\pol_{n-1}(\gr)$. Par l'hypothèse de récurrence, $G$ possède une filtration 
$$0=G_{-1} \subset G_0 \subset G_1\subset\dots\subset G_{n-1}=G$$
telle que chaque sous-quotient $G_i/G_{i-1}$ soit de degré au plus $n-1-i$ et  appartienne à l'image essentielle du foncteur $\fct(\mathbf{ab},\kk\md)\to\fct(\gr,\kk\md)$ de précomposition par l'abélianisation.

Soit $\pi: F \twoheadrightarrow G=F/F_0$ la projection. Pour $1 \leq i \leq n$, on note $F_i$ l'image réciproque de $G_{i-1}$ par $\pi$. Ceci fournit une filtration de $F$ :
$$0=F_{-1} \subset F_0 \subset F_1\subset\dots\subset F_{n-1}\subset F_n=F.$$
Pour $ 1\leq i\leq n$, le quotient $F_i/F_{i-1} \simeq G_{i-1}/G_{i-2}$ se factorise par l'abélianisation et  
$$\deg(F_i/F_{i-1})=\deg(G_{i-1}/G_{i-2}) \leq (n-1)-(i-1)=n-i$$
où $\deg(F)$ désigne le degré polynomial du foncteur $F$.
De plus, comme $\alpha_n( \mathrm{cr}_n(F))$ se factorise par l'abélianisation d'après (\ref{eq-alpha}), il en est de même de $F_0$ et $\deg(F_0) \leq n$ puisque $F_0$ est un sous-foncteur de $F$.
\end{proof}

\section{La catégorie de groupes libres auxiliaire $\G$} \label{section3}

Dans \cite{DV}, nous avons introduit des axiomes sur une petite catégorie $\C$ per\-met\-tant de relier, d'une part, l'homologie de groupes d'automorphismes d'objets de $\C$, à coefficients tordus par un foncteur $F$ de $\fct(\C,\kk\md)$ (supposant connue l'homologie des mêmes groupes à coefficients constants dans $\kk$), d'autre part l'homologie $H_*(\C;F)$ de la catégorie $\C$ à coefficients dans $F$. La catégorie $\gr$ ne satisfait toutefois pas à ces axiomes. Pour y remédier, nous introduisons une autre catégorie de groupes libres $\G$ et tirons dans cette section les conclusions de \cite{DV} pour celle-ci, avant d'étudier son lien homologique avec la catégorie $\gr$.
 
\begin{defi} On note $\G$ la catégorie dont les objets sont les groupes libres de type fini et dans laquelle un morphisme $A\to B$ est un couple $(u,H)$ formé d'un monomorphisme de groupes $u : A\to B$ et d'un sous-groupe $H$ de $B$ tels que $B$ soit le produit libre de l'image de $u$ et de $H$. La composée $A\xrightarrow{(u,H)}B\xrightarrow{(v,K)} C$ est par définition $(v\circ u,v(H)*K)$.
\end{defi}

Cette catégorie est reliée à la catégorie usuelle $\gr$ par les deux foncteurs fondamentaux suivants.

\begin{defi}\label{df-fctfond}
\begin{enumerate}
 \item On note $i : \G\to\gr$ le foncteur égal à l'identité sur les objets et associant à un morphisme $(u,H) : A\to B$ de $\G$ le morphisme de groupes $u : A\to B$.
\item On note $\iota : \G^{op}\to\gr$ le foncteur égal à l'identité sur les objets et associant à un morphisme $(u,H) : A\to B$ de $\G$ le morphisme de groupes $B=u(A)*H\twoheadrightarrow u(A)\xrightarrow{u^{-1}}A$ composé de la projection canonique et de l'isomorphisme inverse de l'isomorphisme qu'induit $u$ entre $A$ et son image.
\end{enumerate}
\end{defi}

Noter que ces foncteurs ne sont ni pleins ni fidèles, mais qu'ils induisent des isomorphismes entre les groupes d'automorphismes des objets.

\begin{rem}\label{rq-sco1}
La catégorie $\G$ constitue un analogue non abélien de la catégorie $\mathbf{S}(\mathbb{Z})$ des groupes abéliens libres de rang fini avec pour morphismes les monomorphismes scindés, {\em le scindage étant donné}. Les foncteurs $i$ et $\iota$ sont alors similaires aux foncteurs covariant et contravariant tautologiques de $\mathbf{S}(\mathbb{Z})$ vers $\mathbf{ab}$.

La catégorie $\mathbf{S}(\mathbb{Z})$ est utilisée dans \cite{Dja-JKT} pour montrer les résultats de Scorichenko (\cite{Sco}) sur l'homologie stable des groupes linéaires sur $\mathbb{Z}$ à coefficients polynomiaux, en simplifiant légèrement la méthode de cet auteur (qui consiste à considérer la catégorie des groupes abéliens libres de rang fini avec pour morphismes les monomorphismes scindés, le scindage ne faisant pas partie de la structure).

Ici, contrairement à la situation pour les groupes linéaires, les foncteurs $i$ et $\iota$ ne jouent pas du tout le même rôle ; on en verra une illustration spectaculaire à la fin de cet article.
\end{rem}

\medskip

Le produit libre $*$ fait de $\G$, tout comme de $\gr$, une catégorie monoïdale symétrique, et les foncteurs $i$ et $\iota$ sont monoïdaux. Le groupe trivial $0$, unité de cette structure, est également objet initial de $\G$. En particulier, pour tous objets $A$ et $B$ de $\G$, on dispose d'un morphisme canonique $A=A*0\to A*B$, qui est équivariant relativement aux actions tautologiques de ${\rm Aut}\,(A)$ et ${\rm Aut}\,(A*B)$ et au monomorphisme de groupes canonique ${\rm Aut}\,(A)\to{\rm Aut}\,(A*B)$. Par conséquent, si $F$ est un objet de $\fct(\G,\kk\md)$, on dispose de morphismes naturels
$$H_*({\rm Aut}\,(\mathbb{Z}^{*n});F(\mathbb{Z}^{*n}))\to H_*({\rm Aut}\,(\mathbb{Z}^{*m});F(\mathbb{Z}^{*m}))$$
pour $n\leq m$ (plonger $\mathbb{Z}^{*n}$ dans $\mathbb{Z}^{*m}$ par l'inclusion des $n$ premiers facteurs) ; la colimite de ces groupes lorsque $n$ parcourt $\mathbb{N}$ est appelée {\em homologie stable des groupes d'automorphismes des groupes libres à coefficients tordus par $F$}. On dispose par ailleurs de l'homologie $H_*(\G;F)$ de la catégorie $\G$ à coefficients dans $F$ ; les inclusions des sous-catégories pleines réduites à l'objet $\mathbb{Z}^{*n}$ dans $\G$ permettent de définir un morphisme naturel de l'homologie stable définie précédemment vers $H_*(\G;F)$. Pour $F$ constant, cette homologie est triviale car $\G$ a un objet initial. Par conséquent, le morphisme naturel défini précédemment ne saurait être, en général, un isomorphisme. 
Pour contourner cette difficulté, on considère plutôt l'homologie $H_*(\G\times G_\infty;F)$, où $G_\infty:=\underset{n\in\mathbb{N}}{\col} {\rm Aut}\,(\mathbb{Z}^{*n})$, qui constitue en fait un abus de notation pour $H_*(\G\times G_\infty;\Pi^*F)$ où $\Pi : \G\times G_\infty\to\G$ est le foncteur de projection (autrement dit, on fait agir le groupe $G_\infty$ trivialement sur le foncteur $F$) : il existe également un morphisme naturel de l'homologie stable de groupes à coefficients dans $F$ vers $H_*(\G\times G_\infty;\Pi^*F)$, obtenu cette fois-ci en prenant la colimite des morphismes induits par les foncteurs ${\rm Aut}\,(\mathbb{Z}^{*n})\to\G\times G_\infty$ dont la composante vers $\G$ est la même que précédemment (inclusion pleinement fidèle d'image $\mathbb{Z}^{*n}$) et la composante ${\rm Aut}\,(\mathbb{Z}^{*n})\to G_\infty$ est le morphisme canonique.

Des résultats généraux des deux premières sections de \cite{DV} on tire la proposition suivante :

\begin{pr}\label{pr-dvgl}
 Pour tout foncteur $F\in {\rm Ob}\,\fct(\G,\kk\md)$, le morphisme naturel 
$$\underset{n\in\mathbb{N}}{\col} H_*({\rm Aut}\,(\mathbb{Z}^{*n});F(\mathbb{Z}^{*n}))\to H_*(\G\times G_\infty;F)$$
de $\kk$-modules gradués, où le groupe $G_\infty=\underset{n\in\mathbb{N}}{\col} {\rm Aut}\,(\mathbb{Z}^{*n})$ opère trivialement sur $F$, est un isomorphisme. 
\end{pr}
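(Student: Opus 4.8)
The plan is to invoke the general machinery developed in the first two sections of \cite{DV}, specialized to the category $\G$. The statement asserts two things simultaneously: first, that the stable homology of the automorphism groups $\mathrm{Aut}(\mathbb{Z}^{*n})$ with twisted coefficients $F$ computes the homology $H_*(\G\times G_\infty; F)$ of the auxiliary category twisted by $G_\infty$ acting trivially; and second, that the comparison morphism constructed just above the statement is precisely the isomorphism produced by that machinery. So the essential task is to verify that $\G$ satisfies the axiomatic hypotheses imposed in \cite{DV}, and then to identify the abstractly-produced isomorphism with the concrete natural map.

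First I would check the structural hypotheses on $\G$. The key formal ingredients needed by the framework of \cite{DV} are a symmetric monoidal structure (here given by the free product $*$), whose unit (here the trivial group $0$) is an initial object, and the fact that the automorphism groups of the generating object behave compatibly under the stabilization maps $A \to A*B$. The excerpt has already recorded exactly these facts: $\G$ is symmetric monoidal under $*$, the unit $0$ is initial, and the canonical morphism $A = A*0 \to A*B$ is equivariant for the inclusion $\mathrm{Aut}(A) \to \mathrm{Aut}(A*B)$. The role of $\mathbb{Z}$ as monoidal generator means every object is a finite $*$-power $\mathbb{Z}^{*n}$, so $\G = \langle\mathbb{Z}\rangle_\G$ in the notation of Section~1. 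One must confirm that the precise axioms of \cite{DV} (existence of the relevant colimit, the required behaviour of hom-sets under the monoidal sum, and whatever stability-type condition the framework imposes) are met by $\G$; the definition of morphisms in $\G$ as split monomorphisms-with-chosen-splitting is designed precisely so that these axioms hold, paralleling the abelian model category $\mathbf{S}(\mathbb{Z})$ discussed in Remark~\ref{rq-sco1}.

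Once the axioms are verified, the general theorem of \cite{DV} yields, for any $F \in \mathrm{Ob}\,\fct(\G,\kk\md)$, an isomorphism between the colimit $\underset{n}{\col}\,H_*(\mathrm{Aut}(\mathbb{Z}^{*n}); F(\mathbb{Z}^{*n}))$ and $H_*(\G\times G_\infty; \Pi^*F)$, where $G_\infty$ acts trivially and $\Pi : \G\times G_\infty \to \G$ is the projection. The final verification is that the abstract isomorphism coincides with the natural map described in the paragraph preceding the statement, which is assembled from the functors $\mathrm{Aut}(\mathbb{Z}^{*n}) \to \G\times G_\infty$ whose $\G$-component is the fully faithful inclusion onto $\mathbb{Z}^{*n}$ and whose $G_\infty$-component is the canonical map. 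This identification is a matter of tracing through the construction in \cite{DV}, since that construction produces the isomorphism precisely as the colimit of maps induced by these stabilization functors.

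The main obstacle I anticipate is not conceptual but bookkeeping: confirming that $\G$, whose morphisms carry the extra datum of a chosen complementary free factor $H$, genuinely satisfies every axiom of the \cite{DV} framework, rather than merely the superficial monoidal features. In particular, the framework likely requires control over the hom-sets $\G(A, A*B)$ and their decomposition under the monoidal structure, and one must check that the chosen-splitting convention makes these behave as the axioms demand (this is exactly the point flagged in Remark~\ref{rq-sco1}, where it is stressed that in $\G$ the splitting is part of the data). Granting that verification, which is the reason $\G$ was introduced in place of $\gr$, the proposition follows by direct appeal to the cited results, and indeed the proof reduces to the single sentence that the hypotheses of the relevant theorems of \cite{DV} are satisfied by $\G$.
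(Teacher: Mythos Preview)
Your overall strategy is correct and matches the paper's: the proposition follows from the general results of \cite{DV} once the relevant axioms are verified for~$\G$. But you have not identified what those axioms actually are, and you treat their verification as ``bookkeeping'' when it is in fact the entire content of the proof.

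The paper names and checks three properties. The first is the one you do mention: every object of $\G$ is a finite free product of copies of $\mathbb{Z}$. The other two are the substantive ones you are missing:
\begin{enumerate}
\item[(ii)] For all $A,B\in{\rm Ob}\,\G$, the group ${\rm Aut}(B)$ acts \emph{transitively} on $\G(A,B)$. Given $(u,H)$ and $(v,K)$ in $\G(A,B)$, the isomorphisms $B\simeq A*H\simeq A*K$ force $H\simeq K$ and produce $\varphi\in{\rm Aut}(B)$ with $\varphi\circ u=v$ and $\varphi(H)=K$, hence $\varphi\circ(u,H)=(v,K)$.
\item[(iii)] For all $A,B$, the map from ${\rm Aut}(B)$ to the stabilizer in ${\rm Aut}(A*B)$ of the canonical morphism $A\to A*B$ is an isomorphism. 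This stabilizer consists of automorphisms of $A*B$ fixing $A$ pointwise and sending $B$ to $B$, which is exactly ${\rm Aut}(B)$.
\end{enumerate}
These are the conditions that make the machinery of \cite{DV} (or Proposition~1.4 of \cite{Dja-JKT}) apply. Your phrases ``the required behaviour of hom-sets'' and ``whatever stability-type condition the framework imposes'' gesture at (ii) and (iii) without naming or proving them, and (ii) in particular is where the choice to record the complement $H$ as part of the data really pays off. Once you state and check these, the proof is complete; the identification of the abstract isomorphism with the concrete map is automatic from the construction in \cite{DV}, so your final paragraph is unnecessary.
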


\begin{proof}
 On note les trois propriétés suivantes :
\begin{enumerate}
 \item tout objet de $\G$ est isomorphe au produit libre d'un nombre fini de copies de $\mathbb{Z}$ ;
\item pour tous objets $A$ et $B$ de $\G$, le groupe d'automorphismes ${\rm Aut}\,(B)$ opère transitivement sur $\G(A,B)$. En effet, si $(u,H)$ et $(v,K)$ sont deux morphismes $A\to B$ de $\G$, les isomorphismes de groupes $B\simeq A*H\simeq A* K$ entraînent $H\simeq K$ et procurent donc un automorphisme $\varphi$ de $B$ tel que $\varphi(H)=K$ et $\varphi\circ u=v$, de sorte que $\varphi\circ (u,H)=(v,K)$. 
\item Pour tous objets $A$ et $B$ de $\G$, le morphisme canonique du groupe ${\rm Aut}\,(B)$ vers le stabilisateur du morphisme canonique $A\to A*B$ de $\G$ sous l'action de ${\rm Aut}\,(A*B)$ est un isomorphisme. Cela découle de ce que ce stabilisateur est l'ensemble des automorphismes $\varphi$ du groupe $A*B$ qui coïncident avec l'identité sur $A$ et tels que $\varphi(B)=B$.
\end{enumerate}

Ces conditions impliquent formellement, d'après \cite{DV} (ou la proposition~1.4 de \cite{Dja-JKT}, qui en reprend les résultats), la proposition.
\end{proof}

En utilisant les résultats remarquables de Galatius (cf. \cite{Gal}) sur l'homologie de $G_\infty$ et la formule de Künneth, on en déduit par exemple :

\begin{cor}\label{cor-dvg}
Lorsque $\kk$ est un corps, on dispose d'un isomorphisme naturel
 $$\underset{n\in\mathbb{N}}{\col} H_*({\rm Aut}\,(\mathbb{Z}^{*n});F(\mathbb{Z}^{*n}))\simeq H_*(\G;F)\otimes H_*(\mathfrak{S}_\infty;\kk).$$

Pour $\kk=\mathbb{Q}$, le morphisme naturel
$$\underset{n\in\mathbb{N}}{\col} H_*({\rm Aut}\,(\mathbb{Z}^{*n});F(\mathbb{Z}^{*n}))\to H_*(\G;F)$$
est un isomorphisme.
\end{cor}

\section{Algèbre homologique dans la catégorie $\gr$} \label{section4}

La catégorie $\fct(\gr,\kk\md)$ se prête à des calculs d'algèbre homologique, et ce de façon beaucoup plus aisée que les catégories $\fct(A\mdf,A\md)$, même lorsque $A$ est l'anneau des entiers ou un corps fini, où les premiers calculs non triviaux s'avèrent déjà délicats (cf. \cite{FPZ} et \cite{FLS} respectivement). Cela tient au fait remarquable, clef de voûte du présent travail, que le foncteur d'abélianisation $\abl\in {\rm Ob}\,\fct(\gr,\kk\md)$ possède une résolution projective explicite, donnée par la résolution barre. Ce fait apparaît dans le travail \cite{PJ} de Jibladze et Pirashvili. Avant de donner explicitement cette résolution, introduisons une simplification de notation. Pour tout entier naturel $n$, on désigne par $P_n$ le foncteur projectif $P^\gr_{\mathbb{Z}^{*n}}$. Ainsi, $P_n(G)\simeq\kk[G^n]$ canoniquement. Comme la catégorie $\gr$ possède des sommes finies, on dispose d'isomorphismes $P_i\otimes P_j\simeq P_{i+j}$, d'où $P_n\simeq P^{\otimes n}$ où $P=P_1$.

\begin{pr}[Cf. \cite{PJ}, proposition~5.1]\label{pr-bar}
 Le foncteur $\abl$ possède une résolution projective :
$$\dots P_{n+1}\xrightarrow{d_n} P_n\to\dots\to P_2\xrightarrow{d_1}P_1$$
où la transformation naturelle $d_n : P_{n+1}\to P_n$ est donnée sur le groupe $G$ par l'application linéaire $\kk[G^{n+1}]\to\kk[G^n]$ telle que
$$d_n([g_1,\dots,g_{n+1}])=[g_2,\dots,g_{n+1}]+\sum_{i=1}^n(-1)^i [g_1,\dots,g_{i-1},g_i g_{i+1},g_{i+2},\dots,g_{n+1}]+(-1)^{n+1}[g_1,\dots,g_n]$$
pour tout $(g_1,\dots,g_{n+1})\in G^{n+1}$.
\end{pr}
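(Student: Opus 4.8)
The plan is to recognize the displayed complex, once evaluated at an arbitrary object of $\gr$, as the inhomogeneous bar complex computing group homology with trivial coefficients, and then to exploit the homological triviality of free groups in degrees $\geq 2$. First I would record the elementary identifications. By the universal property of the free product $\mathbb{Z}^{*n}$, a homomorphism $\mathbb{Z}^{*n}\to G$ is the same datum as an $n$-tuple of elements of $G$, whence $\gr(\mathbb{Z}^{*n},G)\simeq G^n$ naturally in $G$ and $P_n(G)=\kk[\gr(\mathbb{Z}^{*n},-)](G)\simeq\kk[G^n]$; each $P_n$, being representable, is projective by the Yoneda lemma. I would then observe that the formulas defining $d_n$ are natural in $G$, since they involve only the multiplication of $G$ and the deletion of entries and so commute with the maps $P_\bullet(\varphi)$ induced by a morphism $\varphi$ of $\gr$; the relation $d_{n-1}\circ d_n=0$ is the standard bar-complex identity, obtained by cancellation of the composites of the face-type terms, which I would not spell out. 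Finally I would introduce the augmentation $\varepsilon\colon P_1\to\abl$, given on $G$ by the canonical surjection $\kk[G]\to\abl(G)$, $[g]\mapsto\bar g$.

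It then remains to prove exactness of
$$\dots\to P_{n+1}\xrightarrow{d_n}P_n\to\dots\to P_2\xrightarrow{d_1}P_1\xrightarrow{\varepsilon}\abl\to 0,$$
and since in $\fct(\gr,\kk\md)$ exactness is computed at the target, i.e.\ objectwise, I would evaluate at an arbitrary object $G$. The resulting complex $\dots\to\kk[G^2]\to\kk[G]\to\abl(G)\to 0$ is precisely the augmented inhomogeneous bar complex of $G$ with coefficients in the trivial module $\kk$. Its homology in degree $n$ is therefore $H_n(G;\kk)$ for $n\geq 2$; exactness at $P_1(G)=\kk[G]$ amounts to identifying the submodule generated by the $[gh]-[g]-[h]$ with $\operatorname{im}(d_1)$, which is exactly the definition of the abelianization, while surjectivity of $\varepsilon$ at $G$ is clear.

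The crux is thus the vanishing $H_n(G;\kk)=0$ for $n\geq 2$, valid because $G$ is free. This is the one genuinely non-formal input, and the place where the hypothesis that the objects of $\gr$ are free groups is indispensable: for a non-free group the complex would fail to be exact, its defect being measured by the higher homology. Concretely I would either invoke the classical fact that a free group has cohomological dimension at most $1$, or, for each $G$ with a chosen basis $X$, exhibit the length-one free resolution $0\to\kk[G]^{(X)}\to\kk[G]\to\kk\to 0$ of the trivial module (the augmentation ideal being free on the elements $x-1$, $x\in X$) and transport the corresponding contraction to the bar complex. The dependence on the basis makes such a contraction non-natural, which is exactly why one cannot hope for a contracting homotopy of functors; but objectwise exactness is all that is needed here. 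Assembling objectwise exactness over all $G$ then shows that the augmented complex is a resolution of $\abl$ by the projective functors $P_n$, as asserted.
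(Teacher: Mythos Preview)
Your proof is correct and follows essentially the same approach as the paper: recognize the complex, evaluated at a group $G$, as the (truncated) inhomogeneous bar complex computing group homology with trivial coefficients, and then use that a free group has vanishing homology in degrees $\geq 2$ while $H_1(G;\kk)\simeq\abl(G)$. The paper's proof is more terse (it phrases the conclusion as ``the truncated bar complex computes reduced homology, which for a free group is $\abl$ concentrated in degree~$1$''), but your more explicit handling of exactness at $P_1$ and your remark on the non-naturality of the contracting homotopy are perfectly in line with this.
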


\begin{proof}
 La suite de foncteurs de l'énoncé, évaluée sur un groupe $G$, n'est autre que la résolution barre sur ce groupe (dont on a tronqué le degré $0$). Elle calcule donc fonctoriellement l'homologie réduite de $G$ à coefficients dans $\kk$. La conclusion résulte donc de ce que l'homologie réduite à coefficients dans $\kk$ d'un groupe libre est naturellement isomorphe à son abélianisation tensorisée par $\kk$ concentrée en degré~$1$. 
\end{proof}

En tensorisant cette résolution par un projectif $P_r$, on en déduit :

\begin{cor}\label{cor-bar}
 Soient $X\in {\rm Ob}\,\fct(\gr^{op},\kk\md)$ et $r\in\mathbb{N}$. Les groupes de torsion ${\rm Tor}^\gr_*(X,\abl\otimes P_r)$ sont isomorphes à l'homologie du complexe :
$$\dots\to X(n+r+1)\xrightarrow{\delta_n} X(n+r)\to\dots\to X(r+2)\xrightarrow{\delta_1}X(r+1)$$
(on s'autorise à noter $X(i)$ pour $X(\mathbb{Z}^{*i})$) où
$$\delta_n=X(a^{n,r})+\sum_{i=1}^n (-1)^i X(b_i^{n,r})+(-1)^{n+1}X(c^{n,r}),$$
les morphismes $a^{n,r},b_i^{n,r},c^{n,r} : \mathbb{Z}^{*r+n}\to\mathbb{Z}^{*r+n+1}$ étant donnés, via l'identification $\gr(\mathbb{Z}^{*r+n},\mathbb{Z}^{*r+n+1})\simeq (\mathbb{Z}^{*r+n+1})^{n+r}$, par :
$$a^{n,r}=(e_2,\dots,e_{n+r+1})$$
$$b_i^{n,r}=(e_1,\dots,e_{i-1},e_i e_{i+1},e_{i+2},\dots,e_{n+r+1})$$
$$c^{n,r}=(e_1,\dots,e_n,e_{n+2},\dots,e_{n+r+1})$$
où $(e_1,\dots,e_{n+r+1})$ désigne la <<~base~>> tautologique de $\mathbb{Z}^{*r+n+1}$.
\end{cor}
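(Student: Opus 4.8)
The plan is to start from the explicit projective resolution of $\abl$ furnished by Proposition~\ref{pr-bar} and tensor it by the projective $P_r$. Since the tensor product of functors is computed objectwise over $\kk$ and $P_r(G)\simeq\kk[G^r]$ is a free $\kk$-module for every $G\in{\rm Ob}\,\gr$, the endofunctor $-\otimes P_r$ of $\fct(\gr,\kk\md)$ is exact. Applying it to the bar resolution therefore yields, via the canonical isomorphisms $P_i\otimes P_j\simeq P_{i+j}$, a new exact complex
$$\dots\to P_{n+1+r}\xrightarrow{d_n\otimes P_r} P_{n+r}\to\dots\to P_{2+r}\xrightarrow{d_1\otimes P_r}P_{1+r}$$
whose terms are again representable projectives; this is a projective resolution of $\abl\otimes P_r$.

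To compute ${\rm Tor}^\gr_*(X,\abl\otimes P_r)$ I would then apply $X\underset{\gr}{\otimes}-$ to this resolution and take homology. The key input is the standard evaluation isomorphism $X\underset{\gr}{\otimes}P^\gr_c\simeq X(c)$ (tensoring a contravariant functor with a representable projective evaluates it), which turns each $P_{i+r}=P^\gr_{\mathbb{Z}^{*i+r}}$ into $X(i+r)$. This produces precisely the graded complex of the statement, placed in the asserted homological degrees, so that it only remains to identify the differentials $\delta_n$.

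For that I would invoke the Yoneda lemma: the natural transformation $d_n\otimes P_r:P_{n+1+r}\to P_{n+r}$ is determined by its value on the tautological generator $[e_1,\dots,e_{n+r+1}]\in P_{n+1+r}(\mathbb{Z}^{*n+r+1})$, which is an element of $P_{n+r}(\mathbb{Z}^{*n+r+1})\simeq\kk[\gr(\mathbb{Z}^{*n+r},\mathbb{Z}^{*n+r+1})]$, i.e.\ a $\kk$-linear combination of morphisms $\mathbb{Z}^{*n+r}\to\mathbb{Z}^{*n+r+1}$ of $\gr$. Since $d_n\otimes\mathrm{id}_{P_r}$ applies the bar differential to the first $n+1$ tautological generators while fixing the last $r$, reading off the resulting tuples produces exactly the morphisms $a^{n,r}$, $b_i^{n,r}$ and $c^{n,r}$ with the stated signs. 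Under the evaluation isomorphism above, which is contravariant in $c$, the induced map becomes $X(a^{n,r})+\sum_{i=1}^n(-1)^iX(b_i^{n,r})+(-1)^{n+1}X(c^{n,r})=\delta_n$, as wanted.

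The only real obstacle is bookkeeping rather than conceptual: one must keep the Yoneda dictionary between natural transformations of representable projectives and morphisms of $\gr$ consistent, track the index shift introduced by the factor $P_r$, and check that the signs and the precise face-maps coming from tensoring the bar differential with $\mathrm{id}_{P_r}$ match the stated tuples. This is a routine, if slightly delicate, verification; beyond Proposition~\ref{pr-bar}, the flatness of $P_r$ over $\kk$ and the evaluation property of $-\underset{\gr}{\otimes}P^\gr_c$, no further homological input is required.
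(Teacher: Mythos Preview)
Your proposal is correct and follows exactly the approach indicated by the paper, which consists of a single sentence: tensor the bar resolution of Proposition~\ref{pr-bar} by the projective $P_r$. You have simply unpacked the details (exactness of $-\otimes P_r$, the isomorphism $P_i\otimes P_j\simeq P_{i+j}$, the co-Yoneda evaluation $X\underset{\gr}{\otimes}P^\gr_c\simeq X(c)$, and the identification of the differentials) that the paper leaves implicit.
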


\begin{rem}
 Au lieu de la construction barre, on peut utiliser la construction barre réduite (ou normalisée). Cela fournit une résolution projective de $\abl$ de la forme suivante :
$$\dots\bar{P}^{\otimes n+1}\to\bar{P}^{\otimes n}\to\dots\to\bar{P}.$$
On en déduit en particulier ${\rm Tor}^\gr_i(X,\abl)=0$ lorsque $X$ est un foncteur polynomial de degré inférieur ou égal à $i$. Une propriété analogue vaut pour les groupes d'extensions ; notamment, on voit que ${\rm Ext}^*_\gr(\abl,\abl)$ est réduit à $\kk$ concentré en degré $0$. Ces groupes d'extensions sont des analogues pour la catégorie $\gr$ des auto-extensions du foncteur d'inclusion de $\fct(A\mdf,A\md)$, qui s'identifie à la cohomologie de Mac Lane de $A$. Celle-ci est difficile à calculer (cf. les références \cite{FLS} et \cite{FPZ} précitées), même pour les corps, hormis dans le cas de la caractéristique nulle, où l'on peut également s'appuyer sur la résolution barre. 

Néanmoins, cette variante réduite n'est pas très adaptée à nos considérations ultérieures sur la catégorie $\gr$.
\end{rem}

Le corollaire~\ref{cor-bar} constitue la base du critère d'annulation homologique abstrait suivant.

\begin{pr}\label{pr-ancoabst}
 Soit $X\in {\rm Ob}\,\fct(\gr^{op},\kk\md)$ un foncteur tel qu'existe, pour tous objets $A$ et $T$ de $\gr$, une application linéaire $\xi(A,T) : X(A)\to X(T*A)$ vérifiant les propriétés suivantes :
\begin{enumerate}
 \item pour tous $\varphi : A\to B$ et $T$ dans $\gr$, les composées
$$X(B)\xrightarrow{X(\varphi)} X(A)\xrightarrow{\xi(A,T)}X(T*A)$$
et
$$X(B)\xrightarrow{\xi(B,T)} X(T*B)\xrightarrow{X(T*\varphi)}X(T*A)$$
coïncident ;
\item la composée
$$X(A)\xrightarrow{\xi(A,T)}X(T*A)\xrightarrow{X(u(A,T))}X(A)$$
est l'identité, où $u(A,T) : A\to T*A$ est l'inclusion canonique ;
\item étant donnés $\varphi : A\to B$, $T$ et $\tau : T\to T*B$ dans $\gr$ de sorte que le morphisme $\theta : T*B\to T*B$ égal à l'identité sur $B$ et à $\tau$ sur $T$ soit un isomorphisme, si l'on note $\psi : T*A\to T*B$ le morphisme de composantes $A\xrightarrow{\varphi} B\xrightarrow{u(B,T)}T*B$ et $T\xrightarrow{\tau}T*B$, alors les composées
$$X(B)\xrightarrow{X(\varphi)}X(A)\xrightarrow{\xi(A,T)}X(T*A)$$
et
$$X(B)\xrightarrow{\xi(B,T)}X(T*B)\xrightarrow{X(\psi)}X(T*A)$$
coïncident.
\end{enumerate}

Alors ${\rm Tor}^\gr_*(X,\abl\otimes P_r)=0$ pour tout entier $r$.
\end{pr}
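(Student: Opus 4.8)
The plan is to use Corollaire~\ref{cor-bar} to replace ${\rm Tor}^\gr_*(X,\abl\otimes P_r)$ by the homology of the explicit complex $(C_\bullet,\delta_\bullet)$ with $C_n=X(n+r+1)$ and $\delta_n=X(a^{n,r})+\sum_{i=1}^n(-1)^iX(b_i^{n,r})+(-1)^{n+1}X(c^{n,r})$, and then to exhibit a contracting homotopy for this complex built from the maps $\xi$. Concretely, I would set
\[
h_n=\xi(\mathbb{Z}^{*n+r+1},\mathbb{Z}):C_n=X(n+r+1)\longrightarrow X(n+r+2)=C_{n+1},
\]
i.e. adjoin one new free generator, placed in first position $e_1$ (so the old generators of $\mathbb{Z}^{*n+r+1}$ become $e_2,\dots,e_{n+r+2}$ in $\mathbb{Z}*\mathbb{Z}^{*n+r+1}$). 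The goal is then to prove the homotopy identity $\delta_{n+1}h_n+h_{n-1}\delta_n=\mathrm{id}_{C_n}$ for every $n\geq 0$ (with $\delta_0=0$), which forces the whole complex --- hence also $H_0$, and thus ${\rm Tor}_0$ --- to be acyclic.

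The heart of the argument is to compute each $X(\mathrm{face})\circ h_n$ and to match the three types of faces of $\delta_{n+1}$ with the three hypotheses on $\xi$. First, the face $a^{n+1,r}$ is exactly the canonical inclusion $u(\mathbb{Z}^{*n+r+1},\mathbb{Z})$ forgetting the inserted generator $e_1$; hypothesis~(2) gives $X(a^{n+1,r})\circ h_n=\mathrm{id}$, and this single term produces the identity. Next, the faces that do not multiply $e_1$ --- namely $b_i^{n+1,r}$ for $i\geq 2$ and $c^{n+1,r}$ --- are of the form $\mathbb{Z}*\varphi$ for $\varphi$ a face of $\delta_n$ (insertion in first position commutes with any morphism acting only on the remaining generators); hypothesis~(1) then yields $X(b_i^{n+1,r})\circ h_n=h_{n-1}\circ X(b_{i-1}^{n,r})$ and $X(c^{n+1,r})\circ h_n=h_{n-1}\circ X(c^{n,r})$, which after reindexing account exactly for the $b$- and $c$-terms of $h_{n-1}\delta_n$, with opposite sign, and therefore cancel.

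The delicate point --- the one I expect to be the main obstacle --- is the remaining face $b_1^{n+1,r}$, which multiplies the inserted generator $e_1$ by its neighbour $e_2$ and so genuinely interacts with the homotopy. This is precisely what hypothesis~(3) is designed for: I would apply it with $T=\mathbb{Z}$, $\varphi=a^{n,r}$ and the twisting map $\tau:\mathbb{Z}\to\mathbb{Z}*\mathbb{Z}^{*n+r+1}$ sending the generator to $e_1e_2$. The associated $\theta$ (the identity on every $e_j$ with $j\geq 2$, and $e_1\mapsto e_1e_2$) is indeed an automorphism --- a Nielsen transformation, with inverse $e_1\mapsto e_1e_2^{-1}$ --- so that hypothesis~(3) applies, and one checks that the resulting $\psi$ is exactly $b_1^{n+1,r}$. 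Hypothesis~(3) then reads $X(b_1^{n+1,r})\circ h_n=h_{n-1}\circ X(a^{n,r})$, supplying the last term of $h_{n-1}\delta_n$. Collecting everything gives $\delta_{n+1}h_n=\mathrm{id}-h_{n-1}\delta_n$, as wanted.

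The only remaining task is the base degree $n=0$, treated by hand: there $\delta_0=0$, and the two extreme faces degenerate to the same inclusion (both $a^{0,r}$ and $c^{0,r}$ forget the single bar generator $e_1$), so their contributions cancel and one is left with $\delta_1h_0=\mathrm{id}$, giving exactness in degree $0$ as well. I therefore expect the bulk of the write-up to consist of the careful classification of the faces of $\delta_{n+1}$ relative to the inserted generator and of the verification that $\theta$ is invertible so that hypothesis~(3) is legitimately applicable; the ensuing sign bookkeeping that yields the telescoping cancellation is then routine.
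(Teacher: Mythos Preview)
Your proof is correct and follows essentially the same route as the paper's: both use the explicit complex of Corollaire~\ref{cor-bar}, define the contracting homotopy via $\xi(-,\mathbb{Z})$, and match the faces $a$, $b_i$ ($i\geq 2$), $c$, and $b_1$ to hypotheses~(2), (1), (1), and~(3) respectively (with the Nielsen automorphism $e_1\mapsto e_1e_2$ for the last). The only differences are cosmetic---your indexing is shifted by one relative to the paper's, and you spell out the bottom degree $n=0$ (via $a^{0,r}=c^{0,r}$) a bit more explicitly than the paper does.
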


(Remarquer que la première propriété est un cas particulier de la troisième, mais il nous est plus commode pour la suite de les différencier.)

\begin{proof}
 Pour tout entier $n>0$, posons
$$h_n=\xi(\mathbb{Z}^{*n+r},\mathbb{Z}) : X(n+r)\to X(n+r+1).$$

Grâce au corollaire~\ref{cor-bar}, il suffit de prouver la relation d'homotopie $\delta_n h_n+h_{n-1}\delta_{n-1}= Id_{X(n+r)}$ pour tout entier $n>0$ (à lire $\delta_1 h_1=Id$ pour $n=1$). Celle-ci découle des identités suivantes :
\begin{enumerate}
 \item $X(a^{n,r})h_n=Id_{X(n+r)}$ ;
\item $X(b_1^{n,r})h_n=h_{n-1}X(a^{n-1,r})$ ;
\item $X(b_{i+1}^{n,r})h_n=h_{n-1}X(b_i^{n-1,r})$ pour $1\leq i\leq n-1$ ;
\item $X(c^{n,r})h_n=h_{n-1} X(c^{n-1,r})$
\end{enumerate}
qu'on montre maintenant.

La première égalité provient de la deuxième hypothèse sur les morphismes $\xi$, du fait que $a^{n,r}=u(\mathbb{Z}^{*n+r},\mathbb{Z})$.

La troisième (resp. quatrième) égalité provient de la première hypothèse sur les morphismes $\xi$, puisque $b_{i+1}^{n,r}=\mathbb{Z}*b_i^{n,r}$ (resp. $c^{n,r}=\mathbb{Z}*c^{n-1,r}$).

Pour la deuxième égalité, on applique la troisième hypothèse sur les $\xi$ avec $\varphi=a^{n-1,r}=u(\mathbb{Z}^{*n+r-1},\mathbb{Z}) : \mathbb{Z}^{*n+r-1}\to\mathbb{Z}^{*n+r}$ et $T=\mathbb{Z}$, en prenant pour $\tau : \mathbb{Z}\to\mathbb{Z}*\mathbb{Z}^{*n+r}=\mathbb{Z}^{*n+r+1}$ le morphisme donné par l'élément $e_1 e_2$ du but. La condition d'inversibilité de $\theta$ est clairement vérifiée (c'est un générateur canonique du groupe des automorphismes de $\mathbb{Z}^{*n+r+1}$), et $\psi$ n'est autre que $b_1^{n,r}$. Cela termine la démonstration.
\end{proof}

\section{Comportement homologique du foncteur $i : \G\to\gr$ sur les foncteurs polynomiaux} \label{section5}

La proposition~\ref{pr-dvgl} et le corollaire~\ref{cor-dvg} montrent l'intérêt de savoir calculer des groupes d'homologie $H_*(\G;F)$, mais ceux-ci ne sont pas accessibles directement. La section précédente suggère de transiter par la catégorie plus usuelle $\gr$ pour mener à bien certains de ces calculs.

D'un point de vue formel, le foncteur $i : \G\to\gr$ induit, pour tout $F\in {\rm Ob}\,\fct(\gr,\kk\md)$, un morphisme naturel $H_*(\G;i^*F)\to H_*(\gr;F)$. Ce dernier groupe est réduit à $F(0)$ concentré en degré nul, puisque le foncteur constant $\kk$ sur $\gr^{op}$, égal à $P^{\gr^{op}}_0$ (puisque $0$ est objet final de $\gr$), est projectif et représente l'évaluation en $0$. On va voir que ce morphisme est en fait un isomorphisme lorsque $F$ est polynomial. Pour cela, on considère la suite spectrale de Grothendieck associée à la composée
$$\fct(\G^{op}, \kk\md) \xrightarrow{i_!} \fct(\gr^{op}, \kk\md) \xrightarrow{-\underset{\gr}{\otimes} F} \kk\md$$
où $i_!$ est l'extension de Kan à gauche du foncteur $i$, qui prend la forme suivante (cf. par exemple \cite{Dja-JKT}, appendice A.1, Remarque A.2) :
\begin{equation}\label{ssg}
E^2_{p,q}={\rm Tor}^\gr_p(L_q,F)\Rightarrow H_{p+q}(\G;i^*F)
\end{equation}
où $L_q : \gr^{op}\to\kk\md$ est le foncteur associant à l'objet $A$ de $\gr$ le $q$-ème groupe d'homologie de la catégorie $\G[A]$ définie comme suit :

\begin{defi}
Soit $A \in {\rm Ob}\, \gr$. On note $\G[A]$ la catégorie dont les objets sont les couples $(B,f)$ formés d'un objet $B$ de $\G$ et d'un élément $f$ de $\gr(A, i(B))$, les morphismes $(B,f)\to (B',f')$ étant les morphismes $u : B\to B'$ de $\G$ tels que $f'=i(u)\circ f\in\mathbf{gr}(A,i(B'))$.
\end{defi}

Nous montrons, dans la suite de cette section, que la deuxième page de cette suite spectrale est nulle lorsque $F$ est un foncteur polynomial réduit.

Soit $T \in {\rm Ob}\,\gr$, $i$ étant monoïdal, on a un foncteur : $T* - :\G\to\G$ qui induit un foncteur $\G[A]\to\G[T*A]$.

\begin{lm}\label{lmf}
Pour tout entier naturel $r$, on a : 
 $${\rm Tor}_*^{\gr}(L_\bullet, \abl \otimes P_r)=0.$$
\end{lm}

\begin{proof}
Par la proposition \ref{pr-ancoabst} (dont on conserve les notations), il suffit de vérifier que les applications linéaires 
$$\xi(A,T) : L_\bullet(A)\to L_\bullet(T*A)$$
induites en homologie par les foncteurs $\G[A]\xrightarrow{T* - }\G[T*A]$ satisfont les trois propriétés de l'énoncé sus-cité.

La première propriété provient de ce que les morphismes
$$L_\bullet(B)\xrightarrow{L_\bullet(\varphi)} L_\bullet(A)\xrightarrow{\xi(A,T)}L_\bullet(T*A)$$
et
$$L_\bullet(B)\xrightarrow{\xi(B,T)} L_\bullet(T*B)\xrightarrow{L_\bullet(T*\varphi)}L_\bullet(T*A)$$
sont induits par les foncteurs $\G[B]\to\G[T*A]$
$$\big(G\in {\rm Ob}\,\G,f\in\gr(B,iG)\big)\mapsto \big(T*G,T*(A\xrightarrow{\varphi}B\xrightarrow{f}iG)\big)$$
et 
$$\big(G\in {\rm Ob}\,\G,f\in\gr(B,iG)\big)\mapsto \big(T*G,T*A\xrightarrow{T*\varphi}T*B\xrightarrow{T*f}T*iG\big)$$
respectivement, qui sont égaux.

Pour la deuxième, on note que la composée
$$L_\bullet(A)\xrightarrow{\xi(A,T)}L_\bullet(T*A)\xrightarrow{L_\bullet(u(A,T))}L_\bullet(A)$$
est induite par l'endofoncteur
$$\big(G\in {\rm Ob}\,\G,f\in\gr(A,iG)\big)\mapsto \big(T*G,A\xrightarrow{u(A,T)}T*A\xrightarrow{T*f}T*iG)\big)$$
de $\G[A]$. Comme le diagramme
$$\xymatrix{A\ar[rr]^-{u(A,T)}\ar[d]_-f & &  T*A\ar[d]^-{T*f} \\
iG\ar[rr]^-{u(iG,T)} & & T*iG
}$$
commute, le morphisme naturel $G\to T*G$ de $\G$ induit une transformation naturelle de l'identité de $\G[A]$ vers ce foncteur. Par conséquent (cf. \cite{QK}), celui-ci induit l'identité en homologie.  

Venons-en à la dernière propriété. Les composées
$$L_\bullet(B)\xrightarrow{X(\varphi)}L_\bullet(A)\xrightarrow{\xi(A,T)}L_\bullet(T*A)$$
et
$$L_\bullet(B)\xrightarrow{\xi(B,T)}L_\bullet(T*B)\xrightarrow{L_\bullet(\psi)}L_\bullet(T*A)$$
qui nous intéressent sont induites par les foncteurs $\G[B]\to\G[T*A]$
$$\big(G\in {\rm Ob}\,\G,f\in\gr(B,iG)\big)\mapsto \big(T*G,T*A\xrightarrow{T*\varphi}T*B\xrightarrow{T*f}T*iG)\big)$$
et 
$$\big(G\in {\rm Ob}\,\G,f\in\gr(B,iG)\big)\mapsto \big(T*G,T*A\xrightarrow{\psi}T*B\xrightarrow{T*f}T*iG\big)$$
respectivement. Le morphisme de groupes $T*G\to T*G$ dont les composantes sont $T\xrightarrow{\tau}T*B\xrightarrow{T*f}T*G$ et $G\xrightarrow{u(G,T)}T*G$ est un isomorphisme (son inverse est le morphisme de composantes $T\xrightarrow{\tau'}T*B\xrightarrow{T*f}T*G$, où $\tau'$ est la composante $T\to T*B$ de l'inverse de l'automorphisme $\theta$, et $u(G,T)$), c'est donc aussi un automorphisme de $T*G$ {\em dans la catégorie $\G$}, automorphisme que nous noterons $\gamma_G$. La conclusion provient alors des deux observations suivantes :
\begin{enumerate}
 \item le morphisme $\gamma_G$ est naturel en $G\in {\rm Ob}\,\G$ (il suffit pour le voir d'écrire ce que sont les morphismes dans $\G[B]$) ; 
\item il fait commuter le diagramme 
$$\xymatrix{T*A\ar[r]^-{T*\varphi}\ar[rd]_-\psi & T*B\ar[r]^-{T*f}\ar[d]^-\theta & T*G\ar[d]^-{\gamma_G} \\
& T*B\ar[r]^-{T*f} & T*G
}$$
de $\gr$, de sorte qu'il définit une transformation naturelle entre nos deux foncteurs $\G[B]\to\G[T*A]$, qui induisent donc la même application en homologie.
\end{enumerate}

\end{proof}

Nous avons également besoin du résultat suivant, totalement indépendant des considérations précédentes (il n'utilise que les résultats de structure de la première section sur les foncteurs polynomiaux).

\begin{pr}\label{praux-fctpol}
 Soit $X\in {\rm Ob}\,\fct(\gr^{op},\kk\md)$ tel que 
$${\rm Tor}^\gr_*(X,\abl\otimes P_n)=0$$
pour tout $n\in\mathbb{N}$. Alors 
$${\rm Tor}^\gr_*(X,F\otimes G)=0$$
pour tout $F\in {\rm Ob}\,\fct(\gr,\kk\md)$ polynomial réduit (i.e. tel que $F(0)=0$) et tout $G\in {\rm Ob}\,\fct(\gr,\kk\md)$.
\end{pr}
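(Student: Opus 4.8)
The plan is to single out the functors $\abl^{\otimes k}$ as the ``building blocks'' and to show, first, that $\mathrm{Tor}^\gr_*(X,-)$ vanishes on $\abl^{\otimes k}\otimes G$ for \emph{every} $G$, and then to propagate this to an arbitrary reduced polynomial $F$ by induction on $\deg F$ using the counit of Corollaire~\ref{cor-recdeg}. The crucial robust input is that the functor $\abl\otimes-\colon\fct(\gr,\kk\md)\to\fct(\gr,\kk\md)$ is \emph{exact}: for each object $c$ of $\gr$ the group $c$ is free, so $\abl(c)$ is a free $\kk$-module, hence flat, and pointwise tensoring by a flat module is exact. Consequently $T_q(G):=\mathrm{Tor}^\gr_q(X,\abl\otimes G)$ is a homological $\delta$-functor in the variable $G$; it commutes with direct sums and, by hypothesis, vanishes on every projective $P_r=P^\gr_{\mathbb{Z}^{*r}}$ and hence on all projectives of $\fct(\gr,\kk\md)$. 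A standard dimension-shifting argument (resolving $G$ by a short exact sequence $0\to G_1\to Q\to G\to 0$ with $Q$ projective) then forces $T_*(G)=0$ for every $G$. Iterating, writing $\abl^{\otimes k}\otimes G=\abl\otimes(\abl^{\otimes(k-1)}\otimes G)$, I get
$$\mathrm{Tor}^\gr_*(X,\abl^{\otimes k}\otimes G)=0\qquad\text{for all }k\geq 1\text{ and all }G,$$
and likewise for any direct sum $(\abl^{\otimes k})^{(I)}\otimes G$.

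Next I would argue by induction on the degree $n$ of the reduced polynomial functor $F$, the case $n\leq 0$ being trivial since a reduced functor of degree $0$ vanishes. For the inductive step, set $M:=\mathrm{cr}_n(F)$ and consider the counit $\alpha_n(M)\to F$ of Corollaire~\ref{cor-recdeg}, whose kernel $K$ and cokernel $C$ are reduced of degree $\leq n-1$ and thus, by the induction hypothesis, satisfy $\mathrm{Tor}^\gr_*(X,K\otimes G)=\mathrm{Tor}^\gr_*(X,C\otimes G)=0$ for all $G$. It therefore remains to treat the term $\alpha_n(M)\otimes G$. Here I use the formula~(\ref{eq-alpha}), namely $\alpha_n(M)=\abl^{\otimes n}\underset{\mathfrak{S}_n}{\otimes}M$: choosing a presentation of $M$ by free $\kk[\mathfrak{S}_n]$-modules and using that $\alpha_n$ is a left adjoint (hence right exact), one expresses $\alpha_n(M)$ as the cokernel of a map between functors of the form $(\abl^{\otimes n})^{(I)}$, whose tensor products with $G$ were just shown to be acyclic for $\mathrm{Tor}^\gr_*(X,-)$. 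Assembling $F$ from $K$, $C$ and $\alpha_n(M)$ via the long exact sequences of $\mathrm{Tor}^\gr$ then yields $\mathrm{Tor}^\gr_*(X,F\otimes G)=0$.

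The main obstacle is the assembly step, because the pointwise tensor product $-\otimes G$ is only right exact when $\kk$ is not a field: the short exact sequences relating $F$, $\alpha_n(M)$, $K$ and $C$, as well as the coinvariants $\underset{\mathfrak{S}_n}{\otimes}M$, need not remain exact after applying $-\otimes G$, and genuine contributions from $\mathrm{Tor}^{\kk}_{>0}$ and from the homology of $\mathfrak{S}_n$ appear. I would handle this through a hyperhomology (double complex) argument: resolving $F$ by direct sums of the $\abl^{\otimes k}$ and tensoring by $G$ produces a complex all of whose terms lie in the kernel of $\mathrm{Tor}^\gr_*(X,-)$ (by the first paragraph), so that one filtration of $P_\bullet\otimes_\gr(R_\bullet\otimes G)$ gives total homology zero, while the other yields a spectral sequence $E^2_{p,q}=\mathrm{Tor}^\gr_p\big(X,\underline{\mathrm{Tor}}^{\kk}_q(F,G)\big)\Rightarrow 0$. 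When $\kk$ is a field this spectral sequence collapses onto the single row $q=0$, giving $\mathrm{Tor}^\gr_*(X,F\otimes G)=0$ at once; in the integral case one must further check that the auxiliary functors $c\mapsto\mathrm{Tor}^{\mathbb{Z}}_q(F(c),G(c))$ are again annihilated by $\mathrm{Tor}^\gr_*(X,-)$, which I expect to follow by a secondary induction since these functors are once more built, via Corollaire~\ref{corpolfil} and~(\ref{eq-alpha}), from the $\abl$-divisible blocks treated above. This interaction between the non-exactness of $-\otimes G$ and the $\mathfrak{S}_n$-coinvariants is where the real work lies.
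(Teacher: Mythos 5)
Your first step is correct, and in fact slightly slicker than the paper's: since $\abl(c)$ is a free $\kk$-module for every object $c$ of $\gr$, the functor $\abl\otimes-$ is exact, so $G\mapsto{\rm Tor}^\gr_*(X,\abl\otimes G)$ is a $\delta$-functor commuting with direct sums which vanishes on the projective generators $P_r$, hence on everything by dimension shifting; iterating gives ${\rm Tor}^\gr_*(X,\abl^{\otimes k}\otimes G)=0$ for all $k\geq 1$ and all $G$. (The paper obtains the same vanishing via a hyperhomology spectral sequence, after first reducing to the case where $\kk$ is a prime field by universal coefficients --- a reduction you would do well to make at the outset, since it renders $-\otimes G$ exact and disposes of the ${\rm Tor}^\kk_{>0}$ terms that trouble you in your last paragraph.) The induction on the degree via the counit of the corollaire~\ref{cor-recdeg} is also exactly the paper's route.

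The genuine gap is in your treatment of $\alpha_n(M)$ for a non-free $\kk[\mathfrak{S}_n]$-module $M$. A two-term free presentation $L_1\to L_0\to M\to 0$ only exhibits $\alpha_n(M)$ as a cokernel of a map between objects whose tensor product with $G$ is ${\rm Tor}^\gr_*(X,-)$-acyclic, and that controls ${\rm Tor}^\gr_0$ and ${\rm Tor}^\gr_1$ only: writing $0\to Z\to\alpha_n(L_0)\to\alpha_n(M)\to 0$ one gets ${\rm Tor}^\gr_q(X,\alpha_n(M)\otimes G)\simeq{\rm Tor}^\gr_{q-1}(X,Z\otimes G)$ for $q\geq 2$, and $Z$ is merely a quotient of $\alpha_n(L_1)$, about whose higher ${\rm Tor}$ you know nothing. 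So you must take a full free resolution $L_\bullet\to M$; but then $\alpha_n(L_\bullet)$ is \emph{not} a resolution of $\alpha_n(M)$, because $\mathfrak{S}_n$-coinvariants are not exact (except in characteristic $0$ or $>n$), and its higher homology $H_\bullet$ is precisely the obstruction --- an issue distinct from the non-exactness of $-\otimes G$ and not addressed by the spectral sequence you write down. Your proposed patch, ``resolving $F$ by direct sums of the $\abl^{\otimes k}$'', is unjustified: neither the corollaire~\ref{corpolfil} (whose subquotients merely factor through $\mathbf{ab}$) nor anything else in the paper produces such a resolution. The missing idea, which is the crux of the paper's argument, is that $\pi_n\circ\alpha_n:\kk[\mathfrak{S}_n]\md\to\pol_n(\gr)/\pol_{n-1}(\gr)$ is an equivalence by the théorème~\ref{th-pirarec}, hence exact; since $\pi_n$ is exact this forces $\pi_n(H_\bullet)=0$, i.e.\ $H_\bullet$ is reduced polynomial of degree at most $n-1$, and is therefore killed by the induction hypothesis. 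The two hyperhomology spectral sequences of $X\underset{\gr}{\overset{\mathbf{L}}{\otimes}}(\alpha_n(L_\bullet)\otimes G)$ then close the induction. Without this degree-drop observation your induction does not close.
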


\begin{proof}
 La formule des coefficients universels (cf. par exemple \cite{Wei}, th.~3.6.1) montre qu'il suffit de traiter le cas où $\kk$ est un corps, ce qu'on suppose désormais. Ainsi, le produit tensoriel est exact en chaque variable.

On commence par montrer que
\begin{equation}\label{eq-dp1}
 {\rm Tor}^\gr_*(X,\abl\otimes G)=0
\end{equation}
pour tout $G\in {\rm Ob}\,\fct(\gr,\kk\md)$. Cela découle de l'hypothèse, de ce que $G$ possède une résolution par des sommes directes de projectifs $P_n$ et de la suite spectrale d'hyperhomologie associée (on renvoie à \cite{Wei}, §\,5.7 pour ce qui concerne l'hyperhomologie).

On établit maintenant par récurrence sur $d\in\mathbb{N}$ l'assertion suivante : {\em pour tout $F\in {\rm Ob}\,\fct(\gr,\kk\md)$ polynomial réduit de degré inférieur ou égal à $d$ et tout $G\in {\rm Ob}\,\fct(\gr,\kk\md)$, on a ${\rm Tor}^\gr_*(X,F\otimes G)=0$}.

Pour $d=0$ l'assertion est vide, on suppose donc $d>0$ et le résultat démontré jusqu'en degré $d-1$. Soit $F$ polynomial réduit de degré au plus $d$, notons $M$ le $\kk[\mathfrak{S}_d]$-module ${\rm cr}_d(F)$
 : d'après le corollaire~\ref{cor-recdeg}, le noyau $N$ et le conoyau $C$ du morphisme naturel $\alpha_d(M)=\abl^{\otimes d}\underset{\mathfrak{S}_d}{\otimes} M\to F$ sont de degré au plus $d-1$. Ainsi, ${\rm Tor}^\gr_*(X,N\otimes G)$ et ${\rm Tor}^\gr_*(X,C\otimes G)$ sont nuls pour tout $G$, par l'hypothèse de récurrence. Il s'ensuit que la nullité de ${\rm Tor}^\gr_*(X,F\otimes G)$ est équivalente à celle de ${\rm Tor}^\gr_*(X,\alpha_d(M)\otimes G)$.

Celle-ci est acquise lorsque $M$ est un $\kk[\mathfrak{S}_d]$-module {\em libre}, car alors $\alpha_d(M)$ est somme directe de copies de $\abl^{\otimes d}$, et 
$${\rm Tor}_*^\gr(X,\abl^{\otimes d}\otimes G)={\rm Tor}_*^\gr(X,\abl\otimes(\abl^{\otimes d-1}\otimes G))=0$$
par~(\ref{eq-dp1}).

Considérons, dans le cas général, une résolution libre $L_\bullet\to M$ du $\kk[\mathfrak{S}_d]$-module $M$ et notons $H_\bullet$ l'homologie du complexe $\alpha_d(L_\bullet)\to\alpha_d(M)$ de $\pol_d(\gr)$. Comme le foncteur canonique $\pi_d : \pol_d(\gr)\to\pol_d(\gr)/\pol_{d-1}(\gr)$ est exact, l'homologie de l'image par $\pi_d$ de ce complexe s'identifie à $\pi_d(H_\bullet)$. Mais comme le foncteur
$$\kk[\mathfrak{S}_d]\md\xrightarrow{\alpha_d}\pol_d(\gr)\xrightarrow{\pi_d}\pol_d(\gr)/\pol_{d-1}(\gr)$$
est une équivalence de catégories d'après le théorème~\ref{th-pirarec}, donc en particulier un foncteur exact, l'exactitude de $L_\bullet\to M$ implique que $\pi_d(H_\bullet)=0$. Autrement dit, le foncteur gradué $H_\bullet$ est de degré au plus $d-1$. Comme il est également réduit, l'hypothèse de récurrence montre que 
$${\rm Tor}_*^\gr(X,H_\bullet\otimes G)=0$$
pour tout $G$.

On en déduit que l'hyperhomologie $X\underset{\gr}{\overset{\mathbf{L}}{\otimes}} (\alpha_d(L_\bullet)\otimes G)$ est isomorphe à ${\rm Tor}^\gr_*(X,\alpha_d(M)\otimes G)$, par l'une des suites spectrales d'hyperhomologie associées. L'autre suite spectrale d'hyperhomologie associée a pour première page ${\rm Tor}^\gr_*(X,\alpha_d(L_\bullet)\otimes G)$, qui est identiquement nul par ce qu'on a vu plus haut, parce que les $L_i$ sont libres.

Par conséquent, ${\rm Tor}^\gr_*(X,\alpha_d(M)\otimes G)$ et donc ${\rm Tor}^\gr_*(X,F\otimes G)$ sont nuls, ce qui achève la démonstration.

\end{proof}

Nous pouvons désormais démontrer le résultat principal de ce travail.

\begin{thm}\label{thf}
 Soient $F$ et $G$ des foncteurs de $\fct(\gr,\kk\md)$ ; on suppose $F$ polynomial et réduit. Alors
$$\underset{n\in\mathbb{N}}{\col}H_*({\rm Aut}\,(\mathbb{Z}^{*n});(F\otimes G)(\mathbb{Z}^{*n}))=0.$$
\end{thm}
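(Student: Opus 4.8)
The plan is to reduce the stable homology computation to a statement in the functor category $\fct(\G,\kk\md)$ via Proposition~\ref{pr-dvgl}, and then to transfer the vanishing from $\G$ to the more tractable category $\gr$ using the machinery assembled in Sections~\ref{section4} and~\ref{section5}. First I would apply Proposition~\ref{pr-dvgl} to the functor $i^*(F\otimes G)\in\fct(\G,\kk\md)$, which identifies the colimit $\underset{n}{\col}\,H_*({\rm Aut}\,(\mathbb{Z}^{*n});(F\otimes G)(\mathbb{Z}^{*n}))$ with $H_*(\G\times G_\infty;i^*(F\otimes G))$. By the K\"unneth formula (working first over a prime field, which suffices by universal coefficients, exactly as in the proof of Proposition~\ref{praux-fctpol}), this last group is built out of $H_*(\G;i^*(F\otimes G))$ and $H_*(G_\infty;\kk)$; since the latter is nonzero, the whole expression vanishes as soon as $H_*(\G;i^*(F\otimes G))=0$. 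So the crux is to prove
$$H_*(\G;i^*(F\otimes G))=0.$$

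To establish this I would invoke the Grothendieck spectral sequence~(\ref{ssg}), which converges to $H_{p+q}(\G;i^*(F\otimes G))$ with second page $E^2_{p,q}={\rm Tor}^\gr_p(L_q,F\otimes G)$, where $L_q:\gr^{op}\to\kk\md$ is the homology functor $A\mapsto H_q(\G[A])$. It therefore suffices to show that this entire second page vanishes, i.e.\ that ${\rm Tor}^\gr_*(L_q,F\otimes G)=0$ for every $q$. Here the two key preliminary results combine: Lemma~\ref{lmf} shows that the functors $L_\bullet$ satisfy the hypotheses of the abstract vanishing criterion, giving ${\rm Tor}^\gr_*(L_\bullet,\abl\otimes P_r)=0$ for all $r$; and Proposition~\ref{praux-fctpol}, taking $X=L_q$, then upgrades this input into ${\rm Tor}^\gr_*(L_q,F\otimes G)=0$ for every polynomial reduced $F$ and arbitrary $G$. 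With all $E^2_{p,q}$ zero, the spectral sequence forces $H_*(\G;i^*(F\otimes G))=0$, which is what we needed.

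The main obstacle in this argument is not conceptual but the verification that the hypotheses of the transfer results are genuinely met in the present situation: one must check that $i^*(F\otimes G)$ is the right functor to feed into Proposition~\ref{pr-dvgl}, and --- more delicately --- that $F\otimes G$ is a legitimate input for Proposition~\ref{praux-fctpol}, namely that $F$ being polynomial \emph{and reduced} is exactly the hypothesis required there while $G$ is unconstrained. The reduction hypothesis on $F$ is essential: it is what places $F\otimes G$ in the class killed by the inductive argument of Proposition~\ref{praux-fctpol}, whereas the constant part of a non-reduced functor would contribute the nonvanishing $H_*(\gr;\kk)=\kk$. The remaining work is bookkeeping: tracking the polynomial degree through the spectral sequence (each $L_q$ is fixed while the degree filtration on $F$ drives the induction) and confirming that the K\"unneth argument applies uniformly in the homological degree. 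Once these hypotheses are in place, the conclusion follows formally from the three cited results.
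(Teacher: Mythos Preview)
Your proposal is correct and follows essentially the same route as the paper: combine Lemma~\ref{lmf} with Proposition~\ref{praux-fctpol} to kill the $E^2$-page of the spectral sequence~(\ref{ssg}), deduce $H_*(\G;i^*(F\otimes G))=0$, and conclude via Proposition~\ref{pr-dvgl}. The only difference is that you spell out the passage from $H_*(\G;-)=0$ to $H_*(\G\times G_\infty;-)=0$ (via K\"unneth), which the paper leaves implicit; note that this step does not actually require reducing to a field, since the Grothendieck spectral sequence for $\underset{\G\times G_\infty}{\col}=\underset{G_\infty}{\col}\circ\underset{\G}{\col}$ already gives it over any $\kk$.
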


\begin{proof}
 Le lemme~\ref{lmf} et la proposition~\ref{praux-fctpol} montrent que
$${\rm Tor}^\gr_*(L_\bullet,F\otimes G)=0.$$
La suite spectrale~(\ref{ssg}) permet d'en déduire la nullité de $H_*(\G;i^* (F\otimes G))$, qui entraîne celle de $H_*(\G\times G_\infty;i^* (F\otimes G))$ par la formule de Künneth. On conclut alors par la proposition~\ref{pr-dvgl}.
\end{proof}

Rappelons que, dans le cas crucial où les coefficients sont tordus par le foncteur d'abélianisation $\abl$, ce résultat (c'est-à-dire $\underset{n\in\mathbb{N}}{\col} H_*({\rm Aut}\,(\mathbb{Z}^{*n});\abl(\mathbb{Z}^{*n}))=0$) a été antérieurement obtenu par Hatcher-Wahl dans \cite{HW} (et son erratum \cite{HW-erra}) par des méthodes complètement différentes qui permettent également d'obtenir une borne de stabilité homologique explicite. Dans le cas des degrés homologiques $1$ et $2$, ce résultat a été aussi obtenu par Satoh dans \cite{Sat1} et \cite{Sat2} (avec une borne de stabilité meilleure que celle de \cite{HW}), suivant une approche différente de celles d'Hatcher-Wahl et du présent article.

Rappelons également que le théorème~\ref{thf} peut s'obtenir pour toutes les puissances tensorielles du foncteur d'abélianisation par des méthodes géométriques analogues à celles du travail d'Hatcher-Wahl susmentionné : l'article \cite{HV04} d'Hatcher et Vogtmann (et son erratum \cite{HVW} avec Wahl) entraîne facilement ce résultat (avec de plus un résultat de stabilité homologique), comme l'a noté Randal-Williams dans \cite{RW} (proposition~1.3). Rationnellement (i.e. pour $\kk=\mathbb{Q}$), on peut en déduire facilement le résultat pour tous les foncteurs polynomiaux réduits (i.e. le théorème~\ref{thf} pour $G$ constant --- le corollaire~1.4 de \cite{RW} en donne un cas particulier), en utilisant la structure des foncteurs polynomiaux et le caractère semi-simple des représentations rationnelles des groupes symétriques, mais (même pour $G$ constant) le théorème~\ref{thf} ne semble pas pouvoir s'obtenir facilement par ces méthodes lorsqu'on travaille sur $\kk=\mathbb{Z}$.

\section{Comportement homologique en degré $1$ du foncteur $\iota : \G^{op}\to\gr$ sur les foncteurs polynomiaux}

La comparaison de l'homologie de $\G$ et de $\gr^{op}$ via le foncteur $\iota$, à coefficients dans un foncteur (même polynomial) $F$ sur $\gr^{op}$, ne fonctionne pas de la même façon qu'avec le foncteur $i$ : $H_*(\gr^{op};F)$ est toujours réduit à $F(0)$ concentré en degré nul (puisque $0$ est objet initial de $\gr$), mais l'annulation ne subsiste plus pour l'homologie de $H_*(\G;\iota^*F)$. Pour l'instant, les auteurs ne savent pas comment calculer ces groupes d'homologie des foncteurs. Toutefois, on peut résoudre le problème d'une manière différente, plus directe, pour le degré homologique $1$ (en degré homologique nul, le morphisme qu'induit $\iota$ est toujours un isomorphisme), au moins pour un foncteur polynomial se factorisant par le foncteur d'abélianisation $\gr\to\mathbf{ab}$.

\begin{rem} \label{Rem-Betley}
 Cette situation contraste avec celle qu'on rencontre pour les groupes linéaires. Soit en effet un foncteur $F : \mathbf{ab}^{op}\to\mathbf{Ab}$, notons $G : \mathbf{ab}\to\mathbf{Ab}$ le foncteur obtenu en précomposant $F$ par la dualité ${\rm Hom}_\mathbb{Z}(-, \mathbb{Z}) : \mathbf{ab}\to\mathbf{ab}^{op}$. Pour tout entier $n$, l'automorphisme involutif $g\mapsto\,^t\negmedspace g^{-1}$ de $GL_n(\mathbb{Z})$ induit un isomorphisme $H_*(GL_n(\mathbb{Z});F(\mathbb{Z}^n))\xrightarrow{\simeq} H_*(GL_n(\mathbb{Z});G(\mathbb{Z}^n))$ (et ces isomorphismes, lorsque $n$ varie, sont compatibles aux applications de stabilisation), de sorte que l'homologie stable des groupes linéaires sur $\mathbb{Z}$ à coefficients dans $F$ s'identifie à l'homologie stable de ces groupes à coefficients dans $G$. On ne peut pas étendre ce raisonnement aux groupes d'automorphismes des groupes libres parce que l'automorphisme $g\mapsto\,^t\negmedspace g^{-1}$ de $GL_n(\mathbb{Z})$ ne se relève pas en un automorphisme de ${\rm Aut}\,(\mathbb{Z}^{*n})$ pour la projection canonique ${\rm Aut}\,(\mathbb{Z}^{*n})\twoheadrightarrow GL_n(\mathbb{Z})$.
\end{rem}

Le résultat qui suit est essentiellement présent dans le travail \cite{K-Magnus} de Ka\-wa\-zu\-mi (voir la fin de sa section 6), avec l'hypothèse que les coefficients sont des $\mathbb{Q}$-espaces vectoriels (et sans le langage des catégories de foncteurs). Notre démonstration suit celle de Kawazumi, hormis pour les arguments d'annulation pour lesquels nous invoquons des résultats sur l'homologie des foncteurs.

\begin{pr}\label{pr-contra1}
 Soit $F : \mathbf{ab}^{op}\to\mathbf{Ab}$ un foncteur polynomial réduit. Il existe un isomorphisme naturel
$$\underset{n\in\mathbb{N}}{\col} H_1({\rm Aut}\,(\mathbb{Z}^{*n});F(\mathbb{Z}^n))\simeq F\underset{\mathbf{ab}}{\otimes}\mathrm{Id}$$
où ${\rm Aut}\,(\mathbb{Z}^{*n})$ opère sur $F(\mathbb{Z}^n)$ via la projection sur $GL_n(\mathbb{Z})$ et $\mathrm{Id} : \mathbf{ab}\to\mathbf{Ab}$ désigne le foncteur d'inclusion.
\end{pr}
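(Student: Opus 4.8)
Le plan est de partir de la suite exacte courte de groupes
$$1\to IA_n\to {\rm Aut}\,(\mathbb{Z}^{*n})\to GL_n(\mathbb{Z})\to 1,$$
où $IA_n$ désigne le noyau de l'épimorphisme canonique, et d'en exploiter la suite spectrale de Lyndon--Hochschild--Serre à coefficients dans $F(\mathbb{Z}^n)$. Comme l'action de ${\rm Aut}\,(\mathbb{Z}^{*n})$ sur $F(\mathbb{Z}^n)$ se factorise par $GL_n(\mathbb{Z})$, le sous-groupe $IA_n$ opère trivialement sur les coefficients, de sorte que la formule des coefficients universels (l'homologie $H_0(IA_n;\mathbb{Z})=\mathbb{Z}$ étant libre) donne $H_q(IA_n;F(\mathbb{Z}^n))\simeq H_q(IA_n;\mathbb{Z})\otimes F(\mathbb{Z}^n)$ comme $GL_n(\mathbb{Z})$-module. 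Je me contenterais de la suite exacte des cinq termes en homologie, reliant $H_2({\rm Aut}\,(\mathbb{Z}^{*n});F(\mathbb{Z}^n))$, $H_2(GL_n(\mathbb{Z});F(\mathbb{Z}^n))$, les coinvariants $\big(H_1(IA_n;\mathbb{Z})\otimes F(\mathbb{Z}^n)\big)_{GL_n(\mathbb{Z})}$, $H_1({\rm Aut}\,(\mathbb{Z}^{*n});F(\mathbb{Z}^n))$ et $H_1(GL_n(\mathbb{Z});F(\mathbb{Z}^n))$.

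La deuxième étape consiste à passer à la colimite sur $n$ (laquelle est exacte) et à annuler les termes linéaires. Le foncteur $V\mapsto F(V)$ étant polynomial réduit, le théorème de Betley (\cite{Bet2}) --- valable aussi bien pour les foncteurs contravariants que covariants grâce à l'involution transposée-inverse de $GL_n(\mathbb{Z})$ rappelée dans l'introduction --- assure que $\underset{n\in\mathbb{N}}{\col}\,H_i(GL_n(\mathbb{Z});F(\mathbb{Z}^n))=0$ pour tout $i\geq 1$, en particulier pour $i=1$ et $i=2$. La suite exacte précédente fournit donc un isomorphisme naturel
$$\underset{n\in\mathbb{N}}{\col}\,H_1({\rm Aut}\,(\mathbb{Z}^{*n});F(\mathbb{Z}^n))\xrightarrow{\simeq}\underset{n\in\mathbb{N}}{\col}\big(H_1(IA_n;\mathbb{Z})\otimes F(\mathbb{Z}^n)\big)_{GL_n(\mathbb{Z})}.$$

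Je ferais alors appel au calcul de l'abélianisation de $IA_n$, dû à Magnus et explicité par Kawazumi dans \cite{K-Magnus} : le premier homomorphisme de Johnson fournit un isomorphisme $GL_n(\mathbb{Z})$-équivariant $H_1(IA_n;\mathbb{Z})\simeq\mathrm{Hom}(H,\Lambda^2 H)$, où $H=\mathbb{Z}^n$. Le membre de droite ci-dessus devient ainsi la colimite des coinvariants $\big(\mathrm{Hom}(H,\Lambda^2 H)\otimes F(H)\big)_{GL_n(\mathbb{Z})}$, que je veux identifier à $F\underset{\mathbf{ab}}{\otimes}\mathrm{Id}$. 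Pour cela, j'interpréterais cette colimite de coinvariants comme le produit tensoriel au-dessus de $\mathbf{ab}$ du bifoncteur diagonal $V\mapsto\mathrm{Hom}(V,\Lambda^2 V)\otimes F(V)$, en m'appuyant sur le fait classique (cf.\ \cite{Dja-JKT}) que, pour un bifoncteur polynomial, l'homologie stable des $GL_n(\mathbb{Z})$ en degré $0$ coïncide avec ce produit tensoriel. La contraction naturelle $\mathrm{Hom}(V,\Lambda^2 V)\to V$, donnée par $(\varphi,a\wedge b)\mapsto\varphi(a)\,b-\varphi(b)\,a$, induit alors un morphisme naturel de ce produit tensoriel vers $F\underset{\mathbf{ab}}{\otimes}\mathrm{Id}$.

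La difficulté principale sera de montrer que ce dernier morphisme est un isomorphisme. Je la traiterais par dévissage selon le degré : en filtrant $F$ par ses composantes homogènes (à la manière du corollaire~\ref{corpolfil}, transposé à la catégorie $\mathbf{ab}$), je me ramènerais au cas où $F$ est homogène de degré $d$. Le premier théorème fondamental de la théorie des invariants (comptage des facteurs covariants et contravariants $GL_n$-équivariants) montre que les deux membres s'annulent dès que $d\geq 2$, l'unique contribution non nulle provenant du degré $d=1$. Il ne resterait plus qu'à vérifier que la contraction réalise l'isomorphisme en degré $1$, où $F\simeq\mathrm{Hom}(-,L)$ pour un groupe abélien $L$ et où les deux produits tensoriels valent $L$ : c'est le calcul direct, sur les foncteurs $V\mapsto\mathrm{Hom}(V,\mathbb{Z})^{\otimes 2}\otimes\Lambda^2 V$ et $V\mapsto\mathrm{Hom}(V,\mathbb{Z})\otimes V$, de la contraction induite. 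La naturalité en $F$ de tous les morphismes mis en jeu garantit celle de l'isomorphisme final.
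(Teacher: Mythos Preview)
Your overall strategy coincides with the paper's: the Lyndon--Hochschild--Serre five-term exact sequence for $1\to IA_n\to{\rm Aut}(\mathbb{Z}^{*n})\to GL_n(\mathbb{Z})\to 1$, Betley's vanishing for the linear terms, Kawazumi's identification $H_1(IA_n)\simeq{\rm Hom}(H,\Lambda^2 H)$, and the degree-$0$ case of Scorichenko to turn the stable coinvariants into a tensor product over $\mathbf{ab}$. The only divergence is in the final identification of $(F\otimes\mathrm{Id}^\vee)\underset{\mathbf{ab}}{\otimes}\Lambda^2$ with $F\underset{\mathbf{ab}}{\otimes}\mathrm{Id}$.

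The paper dispatches this step in one line via the exponential property of exterior powers (cf.\ \cite{Franjou96}): the isomorphism $\Lambda^2(V\oplus W)\simeq\Lambda^2 V\oplus (V\otimes W)\oplus\Lambda^2 W$ yields directly $(G\otimes\mathrm{Id}^\vee)\underset{\mathbf{ab}}{\otimes}\Lambda^2\simeq G\underset{\mathbf{ab}}{\otimes}\Lambda^1$ for any contravariant $G$, with no hypothesis on $G$ and no case analysis. Your route via the contraction map and d\'evissage by degree is workable but heavier, and two points deserve care. First, both sides are only right exact in $F$, so to propagate the isomorphism through a filtration you need not just that the degree-$\geq 2$ graded pieces have vanishing $-\underset{\mathbf{ab}}{\otimes}\mathrm{Id}$ and $-\underset{\mathbf{ab}}{\otimes}\Lambda^2$ (after twisting by $\mathrm{Id}^\vee$), but also that the relevant ${\rm Tor}_1^\mathbf{ab}$ groups vanish; this is true (homogeneous polynomial functors of different degrees have vanishing ${\rm Tor}^\mathbf{ab}_*$), but it is that fact you should invoke rather than the ``premier th\'eor\`eme fondamental de la th\'eorie des invariants'', which is a statement about $GL_n$ over a field and does not directly give what you need integrally. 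Second, once these Tor vanishings are granted, your degree-$1$ check is correct and the argument goes through.
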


\begin{proof}
 Notons $IA(G)$, pour $G$ un groupe libre, le noyau de l'épimorphisme ${\rm Aut}_\gr(G)\twoheadrightarrow {\rm Aut}_\mathbf{ab}(G_{ab})$ induit par l'abélianisation. Le théorème~6.1 de \cite{K-Magnus} procure un isomorphisme ${\rm Aut}_\mathbf{ab}(G_{ab})$-équivariant
$$H_1(IA(G))\simeq\mathbf{Ab}(G_{ab},\Lambda^2(G_{ab}))$$
où $\Lambda^2$ désigne la deuxième puissance extérieure.

La suite exacte à cinq termes (cf. par exemple \cite{Wei}, §\,6.8.3) associée à l'extension de groupes
$$1\to IA(G)\to {\rm Aut}_\gr(G)\to {\rm Aut}_\mathbf{ab}(G_{ab})\to 1$$
fournit donc une suite exacte
\begin{eqnarray*}
\underset{n\in\mathbb{N}}{\col}H_2(GL_n(\mathbb{Z});F(\mathbb{Z}^n))\to \underset{n\in\mathbb{N}}{\col}H_0\big(GL_n(\mathbb{Z});F(\mathbb{Z}^n)\otimes\mathbf{Ab}(\mathbb{Z}^n,\Lambda^2(\mathbb{Z}^n))\big)\\
\to \underset{n\in\mathbb{N}}{\col} H_1({\rm Aut}\,(\mathbb{Z}^{*n});F(\mathbb{Z}^n))\to \underset{n\in\mathbb{N}}{\col} H_1(GL_n(\mathbb{Z});F(\mathbb{Z}^n)).
\end{eqnarray*}

En effet, comme ${\rm Aut}\,(\mathbb{Z}^{*n})$ agit sur $F(\mathbb{Z}^n)$ via la projection sur $GL_n(\mathbb{Z})$, $IA( \mathbb{Z}^{*n})$ agit trivialement sur $F(\mathbb{Z}^n)$, d'où $H_0\big(IA( \mathbb{Z}^{*n}) ; F(\mathbb{Z}^n)\big)=F(\mathbb{Z}^n)$ et
$$H_0\big(GL_n(\mathbb{Z}); H_1(IA( \mathbb{Z}^{*n}); F(\mathbb{Z}^n))\big)=H_0\big(GL_n(\mathbb{Z}); H_1(IA( \mathbb{Z}^{*n})) \otimes  F(\mathbb{Z}^n)\big).$$
Par un théorème de Betley (cf. \cite{Bet2}, théorème~4.2) et la remarque \ref{Rem-Betley} on a :
 $\underset{n\in\mathbb{N}}{\col}H_*(GL_n(\mathbb{Z});F(\mathbb{Z}^n))=0$ quand $F$ est un foncteur contravariant polynomial réduit.

Par ailleurs, d'après un cas particulier facile du théorème de Scorichenko\,\footnote{Cf. \cite{Sco}, ou \cite{Dja-JKT} pour une reprise publiée de ce résultat. Mais le cas de degré nul dont on a seul besoin ici est beaucoup plus simple : sa démonstration ne nécessite aucun résultat subtil d'annulation en homologie des foncteurs.} étendant le résultat de Betley, on a, pour un bifoncteur $B: \mathbf{ab}^{op} \times \mathbf{ab} \to \mathbf{Ab}$, un isomorphisme naturel :
$$\underset{n\in\mathbb{N}}{\col}H_0\big(GL_n(\mathbb{Z});B(\mathbb{Z}^n, \mathbb{Z}^n )\big)\simeq \mathbb{Z}[\mathbf{ab}^{op}] \underset{\mathbf{ab}^{op} \times \mathbf{ab}}{\otimes} B.$$
En appliquant ce résultat au bifoncteur $B$ défini par 
$$B(U,V)=F(U) \otimes \mathbf{Ab}(U, \Lambda^2(V))$$
qui est naturellement isomorphe à $(F\otimes \mathrm{Id}^{\vee}) \boxtimes \Lambda^2$ (où $\boxtimes$ désigne le produit tensoriel extérieur et l'exposant $\vee$ la dualité des groupes abéliens libres)
et en utilisant l'isomorphisme classique de groupes abéliens 
$$ \mathbb{Z}[\mathbf{ab}^{op}] \underset{\mathbf{ab}^{op} \times \mathbf{ab}}{\otimes}  \left( (F\otimes \mathrm{Id}^{\vee}) \boxtimes \Lambda^2 \right)  \simeq (F\otimes \mathrm{Id}^{\vee}) \underset{ \mathbf{ab}}{\otimes} \Lambda^2$$
on obtient un isomorphisme naturel $$\underset{n\in\mathbb{N}}{\col}H_0\big(GL_n(\mathbb{Z});F(\mathbb{Z}^n)\otimes\mathbf{Ab}(\mathbb{Z}^n,\Lambda^2(\mathbb{Z}^n))\big)\simeq(F\otimes\mathrm{Id}^\vee)\underset{\mathbf{ab}}{\otimes}\Lambda^2\;;$$
il s'agit donc de montrer que ce groupe est isomorphe à $F\underset{\mathbf{ab}}{\otimes}\mathrm{Id}$.

Le foncteur gradué puissance extérieure $\Lambda^\bullet=(\Lambda^i)_{i \in \mathbb{N}}$ étant exponentiel (i.e. il  transforme sommes directes en produits tensoriels, au sens gradué), on a un isomorphisme naturel de groupes abéliens gradués :
\begin{equation} \label{Tor}
\mathrm{Tor}_*(F \otimes \mathrm{Id}^\vee, \Lambda^2) =\underset{i+j=2}{\bigoplus} \mathrm{Tor}_*(F, \Lambda^i) \otimes \mathrm{Tor}_*(\mathrm{Id}^\vee, \Lambda^j)
\end{equation}
(cf. par exemple \cite{Franjou96}, proposition~1.4.2\,\footnote{Cet article parle de groupes d'extensions ; la propriété analogue en termes de groupes de torsion est encore plus facile, a fortiori le seul degré nul dont on a ici besoin.}). Comme $\mathrm{Tor}_0(F, \Lambda^0)=F(0)=0$ et $\mathrm{Tor}_0( \mathrm{Id}^\vee, \Lambda^0)=0$, on déduit de (\ref{Tor}) l'isomorphisme :
$$(F\otimes\mathrm{Id}^\vee)\underset{\mathbf{ab}}{\otimes}\Lambda^2=\mathrm{Tor}_0(F\otimes\mathrm{Id}^\vee, \Lambda^2)=\mathrm{Tor}_0(F, \mathrm{Id}) \otimes \mathrm{Tor}_0(\mathrm{Id}^\vee, \mathrm{Id})=(F\underset{\mathbf{ab}}{\otimes}\mathrm{Id}) \otimes ( \mathrm{Id}^\vee \underset{\mathbf{ab}}{\otimes}\mathrm{Id}).$$
Or le groupe $\mathrm{Id}^\vee \underset{\mathbf{ab}}{\otimes}\mathrm{Id}$ est isomorphe à $\mathbb{Z}$.  En effet, comme le foncteur $\mathrm{Id}$ est un quotient de $P^{\mathbf{ab}}_{\mathbb{Z}}$, en appliquant le foncteur $\mathrm{Id}^\vee \underset{\mathbf{ab}}{\otimes}-$, exact à droite, on obtient une suite exacte :

\begin{equation*} \label{se}
 \mathrm{Id}^\vee \underset{\mathbf{ab}}{\otimes}P^{\mathbf{ab}}_{\mathbb{Z}} \to \mathrm{Id}^\vee \underset{\mathbf{ab}}{\otimes}\mathrm{Id} \to 0.
\end{equation*}
En utilisant l'isomorphisme  $\mathrm{Id}^\vee \underset{\mathbf{ab}}{\otimes}P^{\mathbf{ab}}_{\mathbb{Z}} \simeq \mathbb{Z}$, on montre que $\mathrm{Id}^\vee \underset{\mathbf{ab}}{\otimes}\mathrm{Id} \simeq \mathbb{Z}$.
 \end{proof}

\begin{rem}
 Alors qu'elle ne fournit qu'un résultat très partiel, la démonstration de la proposition précédente utilise beaucoup plus de résultats de théorie des groupes que celle du théorème~\ref{thf} : l'identification de l'abélianisation des sous-groupes $IA$ des automorphismes des groupes libres nécessite par exemple (cf. \cite{K-Magnus}) la connaissance d'un <<~bon~>> ensemble de générateurs de ces groupes. Celle-ci est ancienne (elle remonte aux travaux de Magnus des années 1930), mais absolument pas immédiate ; notre démonstration du théorème~\ref{thf} ne nécessite même pas la connaissance d'un ensemble de générateurs des automorphismes des groupes libres. Cela illustre la puissance des méthodes d'homologie des foncteurs.

Satoh (cf. \cite{Sat1} et \cite{Sat2}) a utilisé la structure fine des groupes d'automorphismes des groupes libres à des fins homologiques. Il montre notamment les résultats suivants : $H_i({\rm Aut}(\mathbb{Z}^{*n});\mathbb{Z}^n)=0$ pour $i=1$ et $n\geq 4$, ou $i=2$ et $n\geq 6$ (ce qui redonne un cas particulier du théorème~\ref{thfi} pour le foncteur d'abélianisation), $H_1({\rm Aut}(\mathbb{Z}^{*n});(\mathbb{Z}^n)^\vee)\simeq\mathbb{Z}$ pour $n\geq 4$ (ce qui redonne l'assertion de la proposition~\ref{pr-contra1} pour le dual du foncteur d'abélianisation) et $H_2({\rm Aut}(\mathbb{Z}^{*n});(\mathbb{Z}^n)^\vee)=0$ pour $n\geq 6$. L'avantage de ces méthodes est d'obtenir également des bornes de stabilité et certains calculs instables (i.e. pour $n$ petit), mais il semble très difficile d'obtenir de la sorte l'annulation homologique stable en {\em tout} degré homologique, même en se cantonnant aux coefficients tordus par le foncteur d'abélianisation.
\end{rem}

Outre étendre la proposition~\ref{pr-contra1} au cas du degré homologique supérieur à $1$ en trouvant un cadre fonctoriel approprié, une question très naturelle est de comprendre la situation plus générale des {\em bi}foncteurs sur $\gr$, i.e. d'étudier le comportement homologique du foncteur $\G\xrightarrow{(\iota^{op},i)}\gr^{op}\times\gr$ à coefficients polynomiaux. En effet, d'une part, la situation connue pour les groupes linéaires (cf. les travaux de Scorichenko susmentionnés) conduit à cette question ; d'autre part, Kawazumi a introduit dans \cite{K-Magnus} (section 4) des classes de cohomologie tordues par des bifoncteurs --- provenant en fait de bifoncteurs sur $\mathbf{ab}$ via l'abélianisation.

\bibliographystyle{plain}
\bibliography{bibli-gl.bib}

\begin{thebibliography}{10}

\bibitem{Bet2}
Stanislaw Betley.
\newblock Homology of {${\rm Gl}(R)$} with coefficients in a functor of finite
  degree.
\newblock {\em J. Algebra}, 150(1):73--86, 1992.

\bibitem{Bet}
Stanislaw Betley.
\newblock Stable {$K$}-theory of finite fields.
\newblock {\em $K$-Theory}, 17(2):103--111, 1999.

\bibitem{Bet-sym}
Stanislaw Betley.
\newblock Twisted homology of symmetric groups.
\newblock {\em Proc. Amer. Math. Soc.}, 130(12):3439--3445 (electronic), 2002.

\bibitem{BorceuxI}
Francis Borceux.
\newblock {\em Handbook of categorical algebra. 1}, volume~50 of {\em
  Encyclopedia of Mathematics and its Applications}.
\newblock Cambridge University Press, Cambridge, 1994.
\newblock Basic category theory.

\bibitem{Dja-JKT}
Aur\'elien Djament.
\newblock Sur l'homologie des groupes unitaires à coefficients polynomiaux.
\newblock {\em J. K-Theory}, 10(1):87--139, 2012.

\bibitem{DV}
Aur{\'e}lien Djament and Christine Vespa.
\newblock Sur l'homologie des groupes orthogonaux et symplectiques \`a
  coefficients tordus.
\newblock {\em Ann. Sci. \'Ec. Norm. Sup\'er. (4)}, 43(3):395--459, 2010.

\bibitem{EML}
Samuel Eilenberg and Saunders Mac~Lane.
\newblock On the groups {$H(\Pi,n)$}. {II}. {M}ethods of computation.
\newblock {\em Ann. of Math. (2)}, 60:49--139, 1954.

\bibitem{Franjou96}
Vincent Franjou.
\newblock Extensions entre puissances ext\'erieures et entre puissances
  sym\'etriques.
\newblock {\em J. Algebra}, 179(2):501--522, 1996.

\bibitem{FFSS}
Vincent Franjou, Eric~M. Friedlander, Alexander Scorichenko, and Andrei Suslin.
\newblock General linear and functor cohomology over finite fields.
\newblock {\em Ann. of Math. (2)}, 150(2):663--728, 1999.

\bibitem{FLS}
Vincent Franjou, Jean Lannes, and Lionel Schwartz.
\newblock Autour de la cohomologie de {M}ac {L}ane des corps finis.
\newblock {\em Invent. Math.}, 115(3):513--538, 1994.

\bibitem{FPZ}
Vincent Franjou and Teimuraz Pirashvili.
\newblock On the {M}ac {L}ane cohomology for the ring of integers.
\newblock {\em Topology}, 37(1):109--114, 1998.

\bibitem{FP}
Vincent Franjou and Teimuraz Pirashvili.
\newblock Comparison of abelian categories recollements.
\newblock {\em Doc. Math.}, 9:41--56 (electronic), 2004.

\bibitem{Gal}
S{\o}ren Galatius.
\newblock Stable homology of automorphism groups of free groups.
\newblock {\em Ann. of Math. (2)}, 173(2):705--768, 2011.

\bibitem{HV}
Manfred Hartl and Christine Vespa.
\newblock Quadratic functors on pointed categories.
\newblock {\em Adv. Math.}, 226(5):3927--4010, 2011.

\bibitem{HVW}
Allan Hatcher, Karen Vogtmann, and Natalie Wahl.
\newblock Erratum to: ``{H}omology stability for outer automorphism groups of
  free groups [{A}lgebr. {G}eom. {T}opol. {\bf 4} (2004), 1253--1272
  (electronic)] by {H}atcher and {V}ogtmann.
\newblock {\em Algebr. Geom. Topol.}, 6:573--579 (electronic), 2006.

\bibitem{HVo}
Allen Hatcher and Karen Vogtmann.
\newblock Cerf theory for graphs.
\newblock {\em J. London Math. Soc. (2)}, 58(3):633--655, 1998.

\bibitem{HV04}
Allen Hatcher and Karen Vogtmann.
\newblock Homology stability for outer automorphism groups of free groups.
\newblock {\em Algebr. Geom. Topol.}, 4:1253--1272, 2004.

\bibitem{HW}
Allen Hatcher and Nathalie Wahl.
\newblock Stabilization for the automorphisms of free groups with boundaries.
\newblock {\em Geom. Topol.}, 9:1295--1336 (electronic), 2005.

\bibitem{HW-erra}
Allen Hatcher and Nathalie Wahl.
\newblock Erratum to: ``{S}tabilization for the automorphisms of free groups
  with boundaries'' [{G}eom. {T}opol. {\bf 9} (2005), 1295--1336; 2174267].
\newblock {\em Geom. Topol.}, 12(2):639--641, 2008.

\bibitem{HW10}
Allen Hatcher and Nathalie Wahl.
\newblock Stabilization for mapping class groups of 3-manifolds.
\newblock {\em Duke Math. J.}, 155(2):205--269, 2010.

\bibitem{PJ}
Mamuka Jibladze and Teimuraz Pirashvili.
\newblock Cohomology of algebraic theories.
\newblock {\em J. Algebra}, 137(2):253--296, 1991.

\bibitem{K-Magnus}
Nariya Kawazumi.
\newblock Cohomological aspects of magnus expansions.
\newblock arXiv : math.GT/0505497, 2006.

\bibitem{K2}
Nicholas~J. Kuhn.
\newblock Generic representations of the finite general linear groups and the
  {S}teenrod algebra. {II}.
\newblock {\em $K$-Theory}, 8(4):395--428, 1994.

\bibitem{Nak}
Minoru Nakaoka.
\newblock Decomposition theorem for homology groups of symmetric groups.
\newblock {\em Ann. of Math. (2)}, 71:16--42, 1960.

\bibitem{Ni}
Jakob Nielsen.
\newblock Die {I}somorphismengruppe der freien {G}ruppen.
\newblock {\em Math. Ann.}, 91(3-4):169--209, 1924.

\bibitem{Pet}
Alexandra Pettet.
\newblock The {J}ohnson homomorphism and the second cohomology of {${\rm
  IA}_n$}.
\newblock {\em Algebr. Geom. Topol.}, 5:725--740, 2005.

\bibitem{Pira-rec}
T.~I. Pirashvili.
\newblock Polynomial functors.
\newblock {\em Trudy Tbiliss. Mat. Inst. Razmadze Akad. Nauk Gruzin. SSR},
  91:55--66, 1988.

\bibitem{QK}
Daniel Quillen.
\newblock Higher algebraic {$K$}-theory. {I}.
\newblock In {\em Algebraic {$K$}-theory, {I}: {H}igher {$K$}-theories ({P}roc.
  {C}onf., {B}attelle {M}emorial {I}nst., {S}eattle, {W}ash., 1972)}, pages
  85--147. Lecture Notes in Math., Vol. 341. Springer, Berlin, 1973.

\bibitem{RW}
Oscar Randal-Williams.
\newblock The stable cohomology of automorphisms of free groups with
  coefficients in the homology representation.
\newblock arXiv : math.AT/1012.1433, 2010.

\bibitem{Sat1}
Takao Satoh.
\newblock Twisted first homology groups of the automorphism group of a free
  group.
\newblock {\em J. Pure Appl. Algebra}, 204(2):334--348, 2006.

\bibitem{Sat2}
Takao Satoh.
\newblock Twisted second homology groups of the automorphism group of a free
  group.
\newblock {\em J. Pure Appl. Algebra}, 211(2):547--565, 2007.

\bibitem{Sco}
Alexander Scorichenko.
\newblock {\em Stable {K}-theory and functor homology over a ring}.
\newblock PhD thesis, Evanston, 2000.

\bibitem{Wei}
Charles~A. Weibel.
\newblock {\em An introduction to homological algebra}, volume~38 of {\em
  Cambridge Studies in Advanced Mathematics}.
\newblock Cambridge University Press, Cambridge, 1994.

\end{thebibliography}
\end{document}